\crefname{section}{§}{§§}
\newcommand{\cA}{\mathcal A}
\newcommand{\cB}{\mathcal B}
\newcommand{\cC}{\mathcal C}
\newcommand{\cE}{\mathcal E}
\newcommand{\cG}{\mathcal G}
\newcommand{\cL}{\mathcal L}
\newcommand{\cM}{\mathcal M}
\newcommand{\cR}{\mathcal R}
\newcommand{\bZ}{\mathbb Z}
\newcommand{\bQ}{\mathbb Q}
\newcommand{\bC}{\mathbb C}
\newcommand{\bR}{\mathbb R}
\newcommand{\vv}{\, | \,}
\providecommand{\norm}[1]{\lVert #1\rVert}
\DeclareMathOperator{\Fix}{Fix}
\DeclareMathOperator{\Sign}{Sign}
\DeclareMathOperator{\sign}{sign}
\DeclareMathOperator{\Aut}{Aut}
\DeclareMathOperator{\End}{End}
\DeclareMathOperator{\ad}{ad}
\DeclareMathOperator{\Hol}{Hol}
\DeclareMathOperator{\Id}{Id}
\DeclareMathOperator{\Ind}{Ind}
\DeclareMathOperator{\Diff}{Diff}
\DeclareMathOperator{\ch}{ch}
\DeclareMathOperator{\csch}{csch}
\DeclareMathOperator{\Tr}{Tr}
\newtheorem{thm}{Theorem}[section]
\newtheorem{lem}[thm]{Lemma}
\newtheorem{prop}[thm]{Proposition}
\newtheorem*{cor*}{Corollary}
\newtheorem*{thmA}{Theorem A}
\newtheorem*{thmB}{Theorem B}
\newtheorem*{corC}{Corollary C}
\theoremstyle{definition}
\newtheorem{ex}[thm]{Example}
\newtheorem*{ack}{Acknowledgments}
\newtheorem{remark}{Remark}[section]
\numberwithin{equation}{section}
\begin{document}

\onehalfspace

\title[Cyclic Actions and the Poincar\'e Homology $3$-Sphere]{Extending Smooth Cyclic Group Actions On The Poincar\'e Homology Sphere}
\author{Nima Anvari}
\date{\today}
\email{anvarin@mpim-bonn.mpg.de}

\begin{abstract}
Let $X_0$ denote a compact, simply-connected smooth $4$-manifold with boundary the Poincar\'e homology $3$-sphere $\Sigma(2,3,5)$ and with even negative definite intersection form $Q_{X_0}=E_8$. We show that free $\bZ/p$ actions on $\Sigma(2,3,5)$ do not extend to smooth actions on $X_0$ with isolated fixed points for any prime $p>7$. The approach is to study the equivariant version of the Yang-Mills instanton-one moduli space for $4$-manifolds with cylindrical ends. As an application we show that for $p>7$ a smooth $\bZ/p$ action on $\#^8 S^2 \times S^2$ with isolated fixed points does not split along a free action on $\Sigma(2,3,5)$. The results hold for $p=7$ if the action is homologically trivial.
\end{abstract}

\maketitle

\section{Introduction}

Development of gauge theoretic techniques have led to substantial new insights in low dimensional topology and the area of smooth finite group actions on $4$-manifolds is no exception. One of the earliest applications was by Furuta \cite{F89} on the non-existence of one fixed point actions of a finite group on homotopy four spheres. The idea was to study the instanton-one Yang-Mills moduli space of $SU(2)$ anti-self dual connections together with its induced group action. Later Braam and Matic developed this further leading to new proof of the Smith conjecture in dimension four \cite{BM93}. A new complication which arises in the equivariant setting is the existence of non-trivial obstructions to transversality. Hambleton-Lee in \cite{HL92} formulated a suitable perturbation theory based on Bierstone's notion of equivariant general position \cite{Bie77} - this gives the equivariant moduli spaces the structure of a Whitney stratified space and by appealing further to the internal structure of the moduli spaces they were able to show that homologically-trivial smooth actions on closed, simply-connected odd definite $4$-manifolds resemble equivariant connected sums of linear algebraic actions on complex projective space (\cite{HL95}, see also \cite{HT04} for the case of non-trivial action on homology). A theorem of Edmonds \cite{E87} shows that for primes $p>3$ topological $4$-manifolds always admit homologically trivial locally linear $\bZ/p$ actions with fixed set consisting of isolated fixed points; when combined with the constraints given by gauge theory this led to the existence of non-smoothable locally linear $\bZ/p$ actions (\cite{KL93} using the theory of pseudofree orbifolds as in \cite{FS85}, see also \cite{KL88} and \cite{HL95}). Indefinite $4$-manifolds presented new difficulties but since then results have been obtained on locally linear but non-smoothable actions using ideas from Seiberg-Witten gauge theory (see \cite{NL2008}, \cite{CK2008} and \cite{kiyono2011}). 

Before the Seiberg-Witten revolution however, there was much effort to lay the foundations for the theory of instanton moduli spaces on $4$-manifolds with cylindrical ends (\cite{T87},\cite{taubes1993},\cite{MMR}) with various applications that soon followed (see for instance \cite{GM93} and \cite{FS94}). One motivation for this paper was the work of Fintushel and Stern in \cite{FS91} to splitting $4$-manifolds along certain Seifert fibered homology $3$-spheres. We would like to revisit these results from an equivariant point of view and use them to study smooth equivariant splittings of $4$-manifolds along integral homology spheres. Recall by Freedman and Taylor \cite{FT77} whenever we decompose the intersection form into orthogonal components $Q_X=Q_{X_0}\oplus Q_{X_1}$ there exists a diffeomorphism $X=X_0 \cup_{\Sigma} X_1$ splitting $X$ along an integral homology $3$-sphere $\Sigma$. In this paper we will ask if this can be done equivariantly for $\bZ/p$ actions on $X=\#^8 S^2 \times S^2$ inducing a free action along the Poincar\'e homology $3$-sphere. 

This homology sphere is obtained by $(-1)$-surgery on the left handed trefoil knot but more conveniently thought of as a link in the complex surface singularity $\Sigma(2,3,5)=\{(z_1,z_2,z_3)\in \bC^3 \vv z_1^{2}+z_2^{3}+z_3^{5}=0\}\cap S^5$. If $X_0$ is a definite, smooth, compact and simply-connected $4$-manifold with boundary $\Sigma(2,3,5)$ then the intersection form is $Q_{X_0}=n(-1)\oplus E_8$ with $n\geq 0$ \cite{Froyshov96}, we will only consider even-definite intersection forms.
\begin{thmA}
Let $X_0$ denote a smooth, simply-connected $4$-manifold with boundary $\Sigma(2,3,5)$ and negative definite intersection form $Q_{X_0}=E_8$. For any prime $p>5$, if a free $\bZ/p$-action on $\Sigma(2,3,5)$ extends to a smooth action on $X_0$, then the rotation data of the isolated fixed points are $(a,b)$ such that $a+b \equiv \pm 1 \pmod p$ or $a+b \equiv \pm 7 \pmod p$.
\end{thmA}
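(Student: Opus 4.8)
The plan is to set up the equivariant instanton-one moduli space on the cylindrical-end manifold obtained by attaching a half-infinite cylinder $\Sigma(2,3,5)\times[0,\infty)$ to $X_0$, and then extract the fixed-point rotation data from a count of the fixed connections together with an equivariant index computation. Since the action on $X_0$ has only isolated fixed points, the $\bZ/p$-fixed-point set of the moduli space should consist of reducible or split anti-self-dual connections, and near each interior fixed point the action is modeled on a linear rotation $(a,b)$ on $\bC^2=\bR^4$. The expected dimension of the instanton-one moduli space for a cylindrical-end $E_8$-manifold is small (the relevant $SU(2)$ moduli space has formal dimension related to $8k - 3(1+b^+)$ plus the spectral/$\rho$-invariant correction from the $\Sigma(2,3,5)$ end), so the fixed-point data is heavily constrained.

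First I would establish the appropriate Fredholm and index theory on the cylindrical-end manifold, following Taubes and the Morgan--Mrowka--Ruberman framework cited in the introduction, so that the equivariant moduli space is well-defined and its fixed-point strata are cut out transversally in the equivariant sense (invoking the Hambleton--Lee perturbation theory based on Bierstone's equivariant general position to handle the transversality obstruction). The key arithmetic step is then the equivariant Atiyah--Singer / Atiyah--Patodi--Singer index computation: at each isolated fixed point the $\bZ/p$-representation on the tangent space contributes a character $\ch$ built from the rotation numbers $(a,b)$ through cotangent-type factors involving $\csch$ of the relevant angles, while the end $\Sigma(2,3,5)$ contributes a $\rho$-invariant (equivariant eta-invariant) defect. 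Setting the virtual fixed-point dimension equal to its geometric value forces a congruence among the $(a,b)$; reducing this relation modulo $p$ is what should produce the dichotomy $a+b\equiv\pm 1$ or $a+b\equiv\pm 7\pmod p$, where the $7$ reflects the signature/rank datum of $E_8$.

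The main obstacle will be controlling the behavior of the moduli space over the noncompact end and verifying that no unexpected fixed strata (for instance, ideal or bubbled configurations concentrating at a fixed point) enter the count. Concretely, I expect the difficulty to lie in proving a gluing/compactness statement strong enough to guarantee that the only contributions to the equivariant count are the isolated-fixed-point singularities whose links are lens spaces $S^3/(\bZ/p)$, and in pinning down the correct normalization of the $\rho$-invariant of the induced free action on $\Sigma(2,3,5)$. Once the moduli space is shown to be a compact (after adding the reducibles and the cone points at fixed loci) equivariantly stratified space of the predicted low dimension, the two admissible residue classes $\pm 1,\pm 7$ should emerge from a finite case analysis of which rotation data $(a,b)$ are compatible with the positivity of the corrected index, completing the proof of Theorem~A.
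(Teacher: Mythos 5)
Your scaffolding (cylindrical-end equivariant moduli space, Taubes grafting, Uhlenbeck compactness, Bierstone/Hambleton--Lee equivariant transversality) matches the paper's, but the step you rely on to produce the congruence is not the paper's mechanism and cannot work. An equivariant index or Lefschetz computation with Atiyah--Patodi--Singer boundary correction yields \emph{aggregate} relations summed over all fixed points --- exactly the relations the paper derives later from the $G$-signature theorem, such as $\sum_i 1/(a_ib_i)\equiv 1/30 \pmod p$ --- and an aggregate congruence cannot isolate the tangential data of each \emph{individual} fixed point, which is what Theorem~A asserts. The paper even exhibits (Example at the end of the $G$-signature section) rotation data for $p=7$, containing points such as $(1,1)$ with $a+b\equiv 2 \pmod 7$, that satisfy the full $G$-signature theorem and all resulting congruences while violating the conclusion of Theorem~A; so the index-theoretic route is a genuine dead end, and indeed the whole point of Theorem~B is to play Theorem~A off against those index-theoretic relations to get a contradiction. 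Two further misreadings: the fixed set of the moduli space does not consist of ``reducible or split'' connections --- there are no reducible ASD connections in this setting (Taubes), and the fixed set consists of irreducible invariant connections, i.e.\ equivariant lifts of the action to the $SU(2)$-bundle; and the $7$ does not ``reflect the signature/rank datum of $E_8$.''

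The idea you are missing is dynamical rather than index-theoretic. Each isolated fixed point with rotation numbers $(a,b)$ generates a one-parameter arc of $\pi$-fixed ASD connections at the Taubes boundary, corresponding to an equivariant lift with isotropy weights $\pm(b-a)$ over that point and $\pm(a+b)$ over the other fixed points and at infinity. The paper shows these arcs cannot join one another (distinct equivariant bundle structures once there are at least three fixed points), cannot bound off fixed-sphere strata, and cannot terminate in the trivial splitting $\cM_0(X,\theta)\times\cM_1(\theta,\theta)$; hence by Uhlenbeck--Taubes compactness they must escape down the end by splitting energy through one of the only two irreducible flat connections $\alpha_1,\alpha_2$ on $\Sigma(2,3,5)$, whose Chern--Simons invariants are $-49/120$ and $-1/120$. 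A $\pi$-invariant connection in such a splitting descends to an $SO(3)$ connection on the cylinder $Q\times\bR$, $Q=\Sigma(2,3,5)/\pi$, with energy $4\ell/p$; comparing with Auckly's formulas for the Chern--Simons invariants of flat connections on $Q$ modulo $4\bZ$ forces the $SO(3)$-holonomy number $k$ of the limiting reducible flat connection to satisfy $k^2\equiv e^2 \pmod p$, where $\ell=e^2/120$, so $e=7$ or $e=1$. Since that holonomy number is $\pm(a+b)\pmod p$ via the adjoint of the lift's isotropy at infinity, the dichotomy $a+b\equiv\pm 1$ or $\pm 7 \pmod p$ follows: the $7$ is $\sqrt{49}$ coming from $CS(\alpha_1)=-49/120$, a flat-connection invariant of the Poincar\'e sphere, not an $E_8$ datum.
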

\begin{remark}
Let $G$ denote the finite cyclic group $\bZ/p$, then the action on $X_0$ gives rise to an integral representation on $H_2(X_0;\bZ)$ and a decomposition (see \cite[p.508]{CR62}, \cite[p.111]{edmonds1989aspects}) $H_2(X_0;\bZ)=\bZ[\bZ/p]^r\oplus \bZ^t \oplus \bZ[\zeta_p]^c$ as $\bZ[G]$-modules with $r,s,c \geq 0$ and $b_2(X_0)=rp+t+(p-1)c$. When $p > b_2(X_0)+1$ we must have $r,c=0$ and $t=b_2(X_0)$, as a result the action is homologically trivial for $p>7$. When $p=7$ the action may not be homologically trivial (see \cite[Example 3.10, p. 168]{quebbemann1981klassifikation}) and $H_2(X_0;\bZ)=\bZ[G]\oplus \bZ$ (since by \cite[Proposition 2.4 i)]{edmonds1989aspects} $c$ must be even, see also the algebraic result in \cite[Proposition 10 c)]{HR78}). In the case of homologically trivial actions, the fixed set consists of isolated points and $2$-spheres and the Lefschetz fixed point formula gives the Euler characteristic $\chi(\Fix(X_0,G))=9$.
\end{remark}
The rotation data are the tangential representations over a fixed point $\bC^2(a,b)$ where the action is given by $t\cdot(z_1,z_2)=(t^a z_1, t^b z_2)$ for non-zero integers $a$ and $b$ well-defined modulo $p$ and where $\bZ/p=\langle t \rangle$. The restriction on the primes is to ensure a free action on $\Sigma(2,3,5)$. When $p$ is relatively prime to $a,b,c$, the free $\bZ/p$ action on $\Sigma(a,b,c)$ is part of the circle action $t \times (x,y,z)=(t^{bc}x,t^{ac}y,t^{ab}z)$ of the Seifert fibered structure on the Brieskorn spheres $\Sigma(a,b,c)$. It can be shown (see \cite{LS92}) that up to equivalence, this is the only free action on $\Sigma(a,b,c)$, referred to as the \emph{standard action}. The necessary conditions for a smooth extension from Theorem A can be checked against the $G$-signature theorem for manifolds with boundary where the correction term is an Atiyah-Patodi-Singer type invariant(see \cite{APS2}) and this leads to the following rigidity result 
\begin{thmB}
Let $X_0$ denote a smooth, simply-connected $4$-manifold with boundary $\Sigma(2,3,5)$ and negative definite intersection form $Q_{X_0}=E_8$. A free $\bZ/p$ action on $\Sigma(2,3,5)$ does not extend to a smooth action on $X_0$ with fixed set consisting of isolated fixed points for any prime $p>7$. The same is true for $p=7$ if the extended action is homologically trivial.
\end{thmB}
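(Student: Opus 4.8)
The plan is to treat Theorem B as a rigidity consequence of Theorem A together with the $G$-signature theorem for manifolds with boundary, and to derive a purely numerical contradiction. \emph{First} I would set up the bookkeeping. By the Remark, the only primes $p>7$ are $p\geq 11>b_2(X_0)+1$, so any smooth extension is automatically homologically trivial; for $p=7$ this is assumed. In either case the Lefschetz fixed point formula gives $\chi(\Fix(X_0,\bZ/p))=9$, and under the hypothesis that the fixed set contains no $2$-spheres this forces exactly nine isolated fixed points $P_1,\dots,P_9$ with tangential rotation data $(a_j,b_j)$. Since $p>5$, Theorem A applies and pins each pair to $a_j+b_j\equiv\pm1$ or $\pm7\pmod p$.

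\emph{Next} I would invoke the Atiyah--Patodi--Singer $G$-signature theorem for $X_0$, whose boundary carries the given free action. For each nontrivial element $t^k$, $1\leq k\leq p-1$, it reads
$$\Sign(t^k,X_0)=-\sum_{j=1}^{9}\cot\!\left(\frac{\pi k a_j}{p}\right)\cot\!\left(\frac{\pi k b_j}{p}\right)+\rho_k\bigl(\Sigma(2,3,5)\bigr),$$
where the sum is the standard collection of local defects at the isolated fixed points and $\rho_k$ is the APS $\rho$-invariant of the boundary action. Because the action is homologically trivial, $t^k$ acts as the identity on $H_2(X_0;\bR)$, so the left-hand side equals the ordinary signature $\Sign(X_0)=-8$ for every $k$. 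This yields a system of $p-1$ trigonometric equations constraining the nine pairs $(a_j,b_j)$.

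\emph{Then} I would compute $\rho_k(\Sigma(2,3,5))$ explicitly. As the free action is the standard one arising from the Seifert $S^1$-structure, it extends over the negative-definite resolution of the singularity (the $E_8$ plumbing), where the induced $\bZ/p$-action fixes the exceptional configuration rather than isolated points. Since $\rho_k$ is intrinsic to the boundary, the $G$-signature theorem applied to this convenient equivariant coboundary expresses it through the plumbing data as an explicit function of $k$. Substituting back produces a closed system that the nine admissible pairs must satisfy simultaneously.

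\emph{Finally} I would show the system is inconsistent. The restriction $a_j+b_j\equiv\pm1,\pm7\pmod p$ confines the fixed-point contributions to a small admissible set, and I expect the cleanest contradiction to come from summing over $k=1,\dots,p-1$: the classical identity $\sum_{k=1}^{p-1}\cot(\pi k a/p)\cot(\pi k b/p)=4p\,s(ab^{-1},p)$ converts the fixed-point terms into Dedekind sums, while the boundary terms sum to a single rational number, and the resulting integrality and congruence constraints cannot be met by nine pairs drawn from only four residue classes. The main obstacle is exactly this last reconciliation: computing $\rho_k$ precisely and then verifying, as a finite but delicate number-theoretic check, that no assignment of nine rotation pairs obeying Theorem A's congruences can balance the $G$-signature equations.
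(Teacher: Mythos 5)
Your setup is sound and in fact reproduces the paper's Section 6: homological triviality for $p>7$ via the representation-theoretic remark, nine isolated fixed points from the Lefschetz formula, the Atiyah--Patodi--Singer $G$-signature theorem with the boundary defect computed from the equivariant $E_8$ plumbing, and Theorem A pinning $a_j+b_j\equiv\pm1,\pm7\pmod p$. The genuine gap is your final step, which you yourself flag as ``the main obstacle'': you never produce the contradiction, and the route you propose --- the \emph{untwisted} $G$-signature equations alone, summed over $k$ via Dedekind sums --- does not close. The difficulty is that Theorem A constrains only the sums $s_j=a_j+b_j$; the products $q_j=a_jb_j$ (equivalently the ratios on which the cotangent and Dedekind-sum terms actually depend) remain free parameters, so the fixed points are not ``drawn from four residue classes'' in any sense that makes your check finite or rigid. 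Indeed, when the paper extracts the $I$-adic congruences from the untwisted relation and feeds in Theorem A, all it obtains is the following: writing $U=\sum 1/q_j$ over points with $s_j\equiv\pm1$ and $V=\sum 1/q_j$ over points with $s_j\equiv\pm7$, the two relations $U+V\equiv 1/30$ and $U+49V\equiv 1/30$ force $V\equiv 0\pmod p$ and hence that at least one fixed point has $s\equiv\pm1$ --- no contradiction; the next congruence merely constrains $\sum_j q_j$. The paper's Example 6.2 (exact solutions of the untwisted $G$-signature equation for $p=7$) illustrates how much freedom the untwisted equation leaves.

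What actually closes the argument in the paper is an input your proposal does not have: the \emph{twisted} $G$-signature theorem of Section 7, applied to the signature operator twisted by a $\pi$-equivariant $SU(2)$ bundle $E$, comparing two equivariant lifts that agree over the cylindrical end. The isotropy weights of these lifts are gauge-theoretic data from Section 5: the lift generated at the Taubes boundary by a fixed point $(a,b)$ has isotropy $\pm(b-a)/2$ over that point and $\pm(a+b)/2$ over every other fixed point, while the plumbing lift has all weights $\pm1/2$. The comparison gives $4\sum_i\lambda_i^2/(a_ib_i)\equiv 1/30\pmod p$; choosing the lift generated by a fixed point $(a,b)$ with $a+b\equiv\pm1$ (which exists by the step above, and which is needed so the two lifts agree on the end) and subtracting the untwisted relation $\sum_i (a_i+b_i)^2/(a_ib_i)\equiv 1/30$ yields $\bigl((b-a)^2-(a+b)^2\bigr)/ab\equiv -4\equiv 0\pmod p$, contradicting $p$ odd. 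Without this twisted relation --- that is, without re-using the equivariant bundle isotropy data supplied by the moduli-space analysis --- the purely numerical constraints you propose to exploit are too weak, so your plan as stated cannot be completed.
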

As an immediate consequence we have the following corollary regarding equivariant embedding of the Poincar\'e homology sphere in $\#^8 S^2 \times S^2$.
\begin{corC}
For $X=\#^8 S^2 \times S^2$ with a smooth $\bZ/p$ action with isolated fixed points, does not contain an equivariant embedding of $\Sigma(2,3,5)$ with a free action, for any prime $p>7$ and for $p=7$ if the action is homologically trivial on $X$.
\end{corC}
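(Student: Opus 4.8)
The plan is to reduce the statement to Theorem B by showing that an equivariant embedding necessarily produces the negative definite $E_8$-filling that Theorem B rules out. Suppose, for contradiction, that $X=\#^8 S^2\times S^2$ carries a smooth $\bZ/p$ action with isolated fixed points and contains an invariant copy of $\Sigma=\Sigma(2,3,5)$ on which the action is free. Since $H_3(X;\bZ)=0$, the class $[\Sigma]$ vanishes, so $\Sigma$ separates $X$ into two codimension-$0$ submanifolds $X=X_0\cup_\Sigma X_1$ with $\partial X_0=\Sigma$ and $\partial X_1=-\Sigma$. Because the embedding is equivariant and $p$ is odd, the induced permutation of the set of sides $\{X_0,X_1\}$ has order dividing $\gcd(p,2)=1$, so the action preserves each $X_i$. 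As the action is free along $\Sigma$, all fixed points lie in the interiors of $X_0$ and $X_1$; hence $\bZ/p$ acts smoothly on each piece with isolated fixed points, extending the induced free action on the boundary.

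Next I would pin down the intersection forms. The manifold $X$ is spin, and since $\Sigma$ is a homology sphere it carries a unique spin structure; the two pieces therefore inherit spin structures, so $Q_{X_0}$ and $Q_{X_1}$ are even, and they are unimodular because each $\partial X_i$ is an integral homology sphere. From the Mayer--Vietoris sequence of $X=X_0\cup_\Sigma X_1$, using $H_*(\Sigma;\bZ)=H_*(S^3;\bZ)$, one obtains $H_1(X_i;\bZ)=0$ together with an equivariant isomorphism $H_2(X_0;\bZ)\oplus H_2(X_1;\bZ)\cong H_2(X;\bZ)$, so $Q_{X_0}\oplus Q_{X_1}\cong 8H$, where $H$ is the hyperbolic plane and $E_8$ denotes the negative definite $E_8$-form as in Theorem B. The Rokhlin constraint for spin fillings of $\Sigma(2,3,5)$ (which bounds the $E_8$-manifold of signature $-8$) forces $\sigma(X_0)\equiv 8\pmod{16}$, and similarly for $X_1$; combined with $\sigma(X_0)+\sigma(X_1)=0$ and $b_2(X_0)+b_2(X_1)=16$, this leaves only $\{\sigma(X_0),\sigma(X_1)\}=\{-8,+8\}$. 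Each piece then has $b_2=8$, and the classification of even unimodular forms gives $\{Q_{X_0},Q_{X_1}\}=\{E_8,-E_8\}$; relabelling, $X_0$ is the negative definite piece with $Q_{X_0}=E_8$ bounding $\Sigma(2,3,5)$.

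It remains to invoke Theorem B. The restricted action is a smooth $\bZ/p$ action on $X_0$ with isolated fixed points extending the free action on $\partial X_0=\Sigma(2,3,5)$, exactly the configuration excluded by Theorem B for $p>7$. For $p=7$ one needs the restricted action on $X_0$ to be homologically trivial: since the action on $H_2(X;\bZ)$ is trivial and $H_2(X_0;\bZ)$ is an equivariant summand, the action on $H_2(X_0;\bZ)$ is trivial as well, so Theorem B again applies. Either way we reach a contradiction, which proves the corollary.

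The main obstacle is matching the hypotheses of Theorem B precisely, specifically the simple-connectivity of the piece $X_0$. Van Kampen does not force $\pi_1(X_0)=1$, since $\pi_1(\Sigma)$ is the (perfect) binary icosahedral group and the amalgamation $\pi_1(X_0)\ast_{\pi_1(\Sigma)}\pi_1(X_1)=1$ can in principle occur with one factor nontrivial. What does follow cleanly is $H_1(X_0;\bZ)=0$ from Mayer--Vietoris, and this is the homological input that the instanton-moduli-space argument behind Theorem B actually uses (namely the even negative definite $E_8$-form, the vanishing $b_1$, and the cylindrical-end structure over $\Sigma(2,3,5)$). The delicate point is thus to confirm that Theorem B, or its proof, goes through under the weaker assumption $H_1(X_0;\bZ)=0$ in place of full simple-connectivity; granting this, the reduction above is complete.
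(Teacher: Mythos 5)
You take essentially the same route as the paper: its proof of the corollary is exactly the remark that follows it (any smooth embedding of $\Sigma(2,3,5)$ in $\#^8 S^2\times S^2$ splits it as $X_0\cup_{\Sigma(2,3,5)}X_1$ with $Q_{X_0}=E_8=-Q_{X_1}$) followed by an application of Theorem B to the negative definite piece, which is precisely your reduction, including the observations that an odd-order group cannot swap the two sides and that freeness on $\Sigma(2,3,5)$ puts all fixed points in the interiors. Two differences deserve comment. First, your signature step is more complete than the paper's: the remark appeals only to van der Blij's lemma, i.e.\ $\sigma(X_i)\equiv 0\pmod 8$, which together with $b_2(X)=16$ and additivity does not exclude $\sigma(X_0)=\sigma(X_1)=0$ (both pieces could a priori have form $4H$); what rules this out is the mod $16$ Rokhlin constraint you invoke --- $\Sigma(2,3,5)$ has Rokhlin invariant $8$, so every spin filling has signature $\equiv 8 \pmod{16}$. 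Second, the ``delicate point'' you raise is genuine, but it is shared, not resolved, by the paper: van Kampen plus the perfectness of $\pi_1(\Sigma(2,3,5))$ gives only $H_1(X_0;\bZ)=0$, not $\pi_1(X_0)=1$, yet Theorem B is stated (and proved) for simply-connected $X_0$, and the paper applies it to the pieces of the splitting without comment. The hypothesis is not cosmetic: simple connectivity is what forces every charge-zero (flat) connection on $X$ to be trivial, which is used in the charge-splitting analysis (Table \ref{TableA2} and Lemma \ref{lemma: lemmaA}) behind Theorem A. So your proposal is at least as rigorous as the paper's own argument, and the one point you leave conditional --- extending Theorem B to pieces with perfect but possibly nontrivial fundamental group, or showing the pieces can be taken simply connected --- is an honest gap in both.
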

\begin{remark}
If $\Sigma(2,3,5)$ embeds smoothly in $\#^8 S^2 \times S^2$ then it separates $X$ into two smooth, spin $4$-manifolds with boundary each with even intersection form. By van der Blij's lemma each side must have signature divisible by $8$ and since $b_2(X)=16$ each side must have definite intersection form. The additivity of the signature shows that they must have opposite sign. So any embedding of $\Sigma(2,3,5)$ into  $\#^8 S^2 \times S^2$ splits $X$ as $X_0\cup_{\Sigma(2,3,5)} X_1$ with $Q_{X_0}=E_8=-Q_{X_1}$. 
\end{remark}
Note from the construction in Edmonds \cite[p.161]{E87}, it is possible to do equivariant surgery on a framed link $K \subset S^3$ to obtain a compact, simply-connected four manifold $X_0$, whose intersection form is $E_8$ and admits a smooth, homologically trivial $\bZ/p$-action with fixed set consisting of isolated points. Since the intersection form is unimodular, the boundary consists of a integral homology three sphere $\Sigma$ with a free $\bZ/p$-action. By reversing orientation and gluing, we see that it is possible to smoothly and equivariantly split $\#^8 S^2 \times S^2=E_8 \cup_{\Sigma} -E_8$ along \emph{some} homology sphere $\Sigma$ in this way. The content of Theorem B is that this splitting cannot occur along the Poincar\'e homology $3$-sphere. Moreover, if we relax the condition that the action have only isolated fixed points, then equivariant plumbing on the $E_8$ diagram produces a smooth, homologically trivial $\bZ/p$-equivariant splitting of $\#^8 S^2 \times S^2=E_8 \cup_{\Sigma(2,3,5)} -E_8$ with fixed set consisting of two fixed $2$-spheres representing homology classes with self-intersection $-2$ and $14$ isolated fixed points. 

\begin{ack}
This work was part of the author's PhD thesis under the supervision of Prof.~Ian Hambleton at McMaster University. The author is grateful for his great patience, expertise and support while working on this project. Partial support was provided by Ontario Graduate Scholarship (OGS 2010-2012). The author would also like to thank Prof.~Nikolai Saveliev for many helpful discussions and to Prof.~Ron Fintushel for pointing out the application of Theorem A to splitting $\#^8 S^2 \times S^2$; our original motivation was to consider equivariant embedding of $\Sigma(2,7,13)$ in homotopy $K3$ surfaces. A portion of this paper was written at the Max Planck Institute for Mathematics in Bonn and the author wishes to thank them for their hospitality during this visit. 
\end{ack}

\section{Equivariant Moduli Spaces}

We begin by reviewing the cylindrical-end theory, for details the reader should consult \cite{MMR} and \cite{D02}. The general setup concerns an integral homology $3$-sphere $\Sigma$ with non-degenerate flat connections (i.e. the twisted cohomology $H^1(\Sigma, \ad \alpha)=0$ for all flat connections $\alpha$) and $X_0$ a compact, smooth, simply-connected $4$-manifold with negative definite intersection form with boundary $\partial X_0=\Sigma$, together with a smooth group action of $\pi=\bZ/p$ which is homologically-trivial on $X_0$ and free on $\Sigma$. Denote by $(X,g)$ the non-compact complete Riemannian manifold $X=X_0 \cup \End(X)$ where $\End(X)=\Sigma \times [0,\infty)$ and $g$ is a $\pi$-invariant metric which restricts to the product metric on the end: $g \vv_{\End(X)}=ds^2+g_{\Sigma}$; the $\pi$-action on $X_0$ is extended in the obvious way on the infinite half-cylinder. 

Consider a principal $SU(2)$ bundle $P$ over $X$ (necessarily trivial), by fixing a trivialization we get bundle maps $\widehat{t}: P \rightarrow P$ which cover the $\pi$-action on $X$. Let $\cG(\pi)=\{ \widehat{t}:P \rightarrow P\vv t \in \pi \}$, then there exists an exact sequence
\begin{equation}
1 \rightarrow \cG \rightarrow \cG(\pi) \rightarrow \pi \rightarrow 1
\end{equation}
where $\cG$ is the gauge group of $P$. The natural action of $\cG(\pi)$ on the space of connections $\cA(P)$ is given by pull-back, and well-defined modulo gauge. So the space of connections $\cB(P)=\cA/\cG$ up to gauge transformations inherits a $\cG(\pi)/\cG=\pi$-action. Recall the definition of the Yang-Mills energy functional acting on the space of connections $\mathcal{YM}:\mathcal{A}(P)\rightarrow [0,\infty]$ is defined by
\begin{equation}
\mathcal{YM}(A)=\dfrac{-1}{8\pi^2}\int_X Tr(F_A \wedge \ast F_A)=\dfrac{1}{8\pi^2}\int_X \vv F_A \vv^2 =\norm{F_A}_{L^2}^2
\end{equation}
where $F_A$ is $\ad(P)$-valued curvature $2$-form of the connection and $*$ is the Hodge star operator associated to the Riemannian metric. This functional is both conformally-invariant and gauge-invariant under the action of $\cG(P)$ and so descends to a well-defined functional on $\cB(P)$ depending only on the conformal class $[g]$. The critical points satisfy the Yang-Mills equation: $d_A^{\ast}F_A=0$. The Hodge $\ast$-star operator on $X$ extends to an involution on bundle valued $2$-forms giving rise to a splitting 
\begin{equation}
\Lambda^2(X)\otimes \ad P = (\Lambda^2_+ \otimes \ad P) \oplus (\Lambda^2_- \otimes \ad P)
\end{equation}
and the corresponding orthogonal $\pm 1$ - eigenspace decomposition:
\begin{equation}
\Omega^2(\ad P)=\Omega^2_{+}(\ad P)\oplus \Omega^2_{-}(\ad P)
\end{equation}
into self-dual and anti-self dual(ASD) $2$-forms. Anti-self dual connections $F_A^+=0$ automatically satisfy the Yang-Mills equations by the Bianchi identity. The $L^2$-finite moduli-spaces are anti-self dual connections modulo gauge with finite Yang-Mills action: 
\begin{equation}
\cM(X,g)=\{[A] \in \cB(P) \vv F_A^+=0, \norm{F_A}_{L^2}^2 < \infty \}.
\end{equation}
This space is $\pi$-invariant when the action is by isometries. It is a fundamental result that $g$-ASD connections with finite Yang-Mills energy are asymptotic to flat connections down the cylindrical-end (see \cite{T87},\cite{MMR},or \cite{D02}). Let $i_t: \Sigma \times \{ t\} \hookrightarrow \Sigma \times [0,\infty)$ denote the inclusion map, then this convergence result tells us that in this non-degenerate setting we have exponential decay to a flat connection and a well-defined limit $\lim_{t \rightarrow \infty}[i_t^{*}A\vv_{\End{X}}]$ in $\cR(\Sigma)$, the representation variety of flat connections modulo gauge. Since flat connections are $\pi$-invariant under the action of pull back, this defines a $\pi$-equivariant boundary map $\partial_{\infty}$ 
\begin{gather}
\partial_{\infty}:  \cM(X,g) \rightarrow \cR(\Sigma) \notag\\
[A] \mapsto \lim_{t \rightarrow \infty}[i_t^{*}A\vv_{\End{X}}]
\end{gather}
We can partition the $g$-ASD moduli space according to its limiting flat connection: 
\begin{equation}
\cM(X,g)=\bigsqcup_{\alpha \in \cR(Y)} \cM(X,\alpha).
\end{equation}
If $X$ has more than one end, then there exists a boundary map for each end. When $X$ is the cylinder $\Sigma \times \bR$ with product metric $g$ then there exist finite energy $g$-ASD connections on the cylinder if and only if there exists a gradient-flow line for the Chern-Simons functional connecting the flat connections on the ends \cite{Floer88}. The energy of a $g$-ASD connection $A$ is given by the second relative Chern number
\begin{equation}
\ell=c_2(A)=\dfrac{1}{8\pi^2}\int_X Tr(F_{A}^2)
\end{equation} 
and this value is congruent modulo $\bZ$ to the Chern-Simons invariant of the limiting flat connection $\alpha$. Thus for $g$-ASD connections with finite Yang-Mills action, the energy takes on a discrete set of values determined by the Chern-Simons invariant and we get a further decomposition according to energy value 
$\cM(X,\alpha)=\bigsqcup_{\ell \geq 0} \cM_{\ell}(X,\alpha)$ with $\ell \equiv CS(\alpha) \mod \bZ$.

Let $\delta$ be a positive constant and $\tau : X \rightarrow [0,\infty)$ denote a smoothing of the ``time" parameter on the end $\Sigma \times [0,\infty)$ to a function that is zero on $X_0$ and $\Omega^i_{k,\delta}(X,\ad P)$ to be the completion of compactly supported forms $\Omega^i(X,\ad P)$ by the weighted Sobolev norm $L^2_{k,\delta}$: 
\begin{equation}
\norm{a}^2_{k,\delta}=\int_X e^{\delta \tau}\{\vv d_{A_0}^k a \vv^2+\cdots+\vv d_{A_0}a\vv^2+\vv a\vv^2\}.
\end{equation}
Let $A_0$ denote a connection which extends the given flat connection $\alpha$ and has second relative Chern number $c_2(A_0)=\ell$. Denote by $\cA_{\ell}(\alpha)=\{A_0+a \vv a \in  \Omega^1_{2,\delta}(X,\ad P)\}$ and $\cG_{\ell}(\alpha)=\{u \in \cG\vv d_{A_0}u \in \Omega^1_{2,\delta}(X,\ad P)\}$ respectively, the space of connections that limit exponentially to $\alpha$ on the cylindrical-end and gauge transformations that stabilize $\alpha$ on the end. There is a slice theorem for the action of $\cG_{\ell}^{0}(\alpha)$(the analogue of the based gauged group) on $\cA_{\ell}(\alpha)$ given by the Coulomb gauge condition $d^{\ast}_{A,\delta}a=0$, where $d_{A,\delta}^{\ast}=e^{-\delta \tau}d_A^{\ast}e^{\delta \tau}$ is the adjoint operator with respect to the weighted norm. The framed moduli space $\cM_{\ell}^{0} \subset \cA(\alpha)/\cG_{\ell}^0$ admits a smooth $\Gamma_{\alpha}$ action (the stabilizer of $\alpha$) with the quotient being the genuine moduli space $\cM_{\ell}(X,\alpha)$ of exponentially decaying $g$-ASD connections. The results on exponential decay tell us that these function spaces and hence the moduli space $\cM_{\ell}(X,\alpha)$ capture all the finite-energy instantons that are asymptotic to $\alpha$ when $\delta$ is suitably chosen.

The infinitesimal deformations of an ASD connection $[A] \in \cM(X,g)$ is given by the first cohomology of the fundamental elliptic $\delta$-decay complex:
\begin{equation}
0 \rightarrow \Omega^0_{3,\delta}(X,\ad P) \xrightarrow{d_{A}} \Omega^1_{2,\delta}(X,\ad P) \xrightarrow{d_{A}^{+}} \Omega^2_{1,\delta,+}(X,\ad P) \rightarrow 0
\end{equation}
with associated elliptic operator 
\begin{equation}
D_{A,\delta}=d_{A,\delta}^{\ast}+d_A^+ : \Omega^1_{2,\delta}(X,\ad P) \rightarrow \Omega^0_{3,\delta}(X,\ad P) \oplus \Omega^2_{1,\delta,+}(X,\ad P)
\end{equation}
where over the $\End(X)$ this operator can be written as 
\begin{equation}
D_{A,\delta}\vv_{\End(X)}=\dfrac{\partial}{\partial t}+(L_{A(t)}-\delta)
\end{equation}
for a self-adjoint elliptic operator $L_{A(t)}$:
\begin{equation}
L_{A(t)}=
\begin{pmatrix}
0 & -d_{A(t)}^{\ast}\\
-d_{A(t)}& \ast_{3}d_{A(t)}
\end{pmatrix}
\end{equation}
with pure point real spectrum without accumulation points. The operator $D_{A,\delta}$ is Fredholm for $\delta$ not in the spectrum of $L_{\alpha}$, where $\alpha$ is the limiting flat connection of $A$. The formal dimension of the framed moduli space $\cM^0_{\ell}(X,\alpha)$ is given by $\Ind D_{A,\delta}=\Ind D_A +h^{0}_{\alpha}$ and can be computed from \cite{APS1}, see \cite[p. 138]{Nikolai}:
\begin{equation*}
\begin{split}
\Ind D_{A}& =  -\int_X \hat{A}(X)ch(S^{+}\otimes \ad P) -\frac{1}{2}(\dim \ker(L_{\alpha})-\eta_{\alpha}(0))\\
& = -\frac{1}{2}\int_X(\cL+\cE)(X)(3-8c_2(P)) -\frac{1}{2}(\dim \ker(L_{\alpha})-\eta_\alpha(0))\\
& = 8c_2(P)-\frac{3}{2}(\chi+\sigma+\eta_{\theta}(0))(X)-\frac{1}{2}h_{\alpha}+\frac{1}{2}\eta_{\alpha}(0).
\end{split}
\end{equation*}
since by the Hirzebruch signature theorem 
\begin{equation}
\int_X \dfrac{1}{3}p_1(\ad P)=\Sign(X)+\eta_{\theta}(0)
\end{equation}
and the eta invariant is 
\begin{equation}
\eta_{\alpha}(s)=\sum_{\lambda \neq 0}  \sign(\lambda)\vv \lambda\vv^{-s}
\end{equation}
where the sum is over non zero eigenvalues of $L_{\alpha}$ \cite{APS1}. The relative second Chern class is the energy on $X$:
\begin{equation}
\ell=c_2(P)=\dfrac{1}{8\pi^2}\int_X Tr(F_A^2) \equiv CS(\alpha) \mod \bZ.
\end{equation}
Putting this together we get 
\begin{equation}
8\ell-\dfrac{3}{2}(\chi+\sigma)(X)-\dfrac{1}{2}h_{\alpha}-\dfrac{3}{2}\eta_{\theta}(0)+\dfrac{1}{2}\eta_{\alpha}(0).
\end{equation}
The Atiyah-Patodi-Singer rho invariant is $\rho(\alpha)=\eta_{\alpha}(0)-3\eta_{\theta}(0)$
and this results in the following formal dimension formula
\begin{equation}
\dim \cM_{\ell}(X,\alpha)=8\ell-\dfrac{3}{2}(\chi+\sigma)(X)-\dfrac{1}{2}(h^1_{\alpha}+h^0_{\alpha})+\dfrac{1}{2}\rho(\alpha)
\end{equation}
where $h^i_{\alpha}=\dim_{\bR}H^i(\Sigma,\ad\alpha))$ for $i=0,1$. The corresponding dimension formula of a Floer-type moduli space is
\begin{equation}
\dim \cM_{\ell}(\Sigma\times \bR,\alpha,\beta)=8\ell-\dfrac{1}{2}(h_{\alpha}+h_{\beta})+\dfrac{1}{2}(\rho(\beta)-\rho(\alpha))
\end{equation}
with $h_{\alpha}=h^1_{\alpha}+h^0_{\alpha}$, and similarly for $h_{\beta}$ with $\ell \equiv CS(\beta)-CS(\alpha) \mod \bZ$.

The moduli space of interest will be those $g$-ASD connections on $X$ with finite Yang-Mills action and asymptotic to the trivial product connection on the $\End(X)$. Since the Chern-Simons invariant of the trivial connection vanishes we have moduli spaces $\cM_{\ell}(X,\theta)$ with energy $\ell \in \bZ^+$. When the intersection form is negative definite this formal dimension is given by
\begin{equation}
\dim\cM_{\ell}(X,\theta)=8\ell-3
\end{equation}  
as in the closed negative definite case. For one unit of total Yang-Mills energy this is $5$-dimensional and with our choice of a $\pi$-invariant Riemannian metric the anti-self duality equations $F_A=-\ast F_A$ are $\pi$-invariant, moreover as the action is given by orientation-preserving isometries the charge is preserved . Thus we get an induced $\pi$-action on $\cM_{1}(X,\theta)$ and this is the equivariant instanton-one moduli space we study to extract information about the original $\pi$-action on $X$. 

\section{Uhlenbeck-Taubes Compactification}

Let us recall the compactness theorem of Uhlenbeck (\cite{Lawson85}, \cite{FU91},\cite{DK90}) for the instanton moduli space over a closed, simply-connected Riemannian four manifold $(X,g)$. Intuitively, if we are given an infinite sequence of uniformly bounded, $g$-ASD connections without a convergent subsequence then there exists a gauge equivalent subsequence which has a weak limit where the limiting ASD connection has a curvature density that accumulates in integral amounts of the total energy around a finite number of points. So for a moduli space with one unit of total energy, there can be at most one point where curvature is highly concentrated, more precisely,  

\begin{thm}[Uhlenbeck,\cite{DK90}]
Let $(X,g)$ denote a smooth, simply-connected Riemannian four-manifold and let $\{A_n\}$ denote a sequence of $g$-ASD connection on a $SU(2)$-bundle with charge $c_2(P)=1$. Then there exists a subsequence also denoted by $\{A_n\}$ such that one of the following holds:
\itemize
\item For each $A_n$ there exists a gauge-equivalent connection $\widetilde{A_n}$ such that $\{\widetilde{A_n} \}$ converges in the $C^{\infty}$-topology on $X$ to a ASD connection $A \in \cM_1(X)$.
\item There exists a point $x \in X$ and trivializations $\rho_n:X-\{x\} \times SU(2) \rightarrow E\vv_{X-x}$ such that $\rho_n^{*}(A_n\vv_{X-x})$ converges in $C^{\infty}$-topology on compact subsets to the trivial product connection and the curvature densities $\vv F_{A_n} \vv^2$ converge as measures to $8\pi^2 \delta(x)$.
\end{thm}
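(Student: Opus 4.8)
The plan is to identify the finite set of points where curvature concentrates, establish $C^{\infty}$ convergence away from this set, and then combine a removable-singularity argument with energy quantization to obtain the clean charge-one dichotomy. First I would fix the Uhlenbeck threshold $\varepsilon_0 > 0$ characterized by the property that any $g$-ASD connection over a small geodesic ball $B$ with $\int_B \abs{F}^2 < \varepsilon_0$ admits a Coulomb (Uhlenbeck) gauge in which the connection form is controlled in $L^2_1$ by its curvature. Since each $A_n$ has $c_2(P)=1$ the total energy is fixed, $\int_X \abs{F_{A_n}}^2 = 8\pi^2$, so I would define the concentration set
\[
S = \Bigl\{\, x \in X : \liminf_{n \to \infty} \int_{B_r(x)} \abs{F_{A_n}}^2 \geq \varepsilon_0 \ \text{ for every } \ r > 0 \,\Bigr\}.
\]
A standard covering and measure argument, using only the uniform energy bound, shows that $S$ is finite with $\abs{S} \leq 8\pi^2/\varepsilon_0$.

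Away from $S$ I would proceed locally and then patch. For a compact $K \subset X \setminus S$, cover $K$ by finitely many balls on which, after passing to a subsequence, the energy remains below $\varepsilon_0$; the local gauge-fixing theorem then yields connection forms bounded in $L^2_1$. On each ball the anti-self-duality equation $d_A^+ = 0$ together with the Coulomb condition constitutes an elliptic first-order system, so elliptic regularity and bootstrapping upgrade these to uniform $C^{\infty}$ bounds on compact subsets. A diagonal argument over an exhaustion of $X \setminus S$, patching the local Coulomb gauges across overlaps by transition functions that themselves converge along a further subsequence, then produces gauge-equivalent representatives $\widetilde{A_n}$ converging in $C^{\infty}_{\mathrm{loc}}(X \setminus S)$ to a finite-energy ASD connection $A_\infty$ on a bundle over $X \setminus S$.

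To account for the lost energy I would invoke Uhlenbeck's removable-singularity theorem, which extends $A_\infty$ smoothly (after a gauge change) across each point of $S$ to an ASD connection on a bundle $E_\infty \to X$ with $c_2(E_\infty) = \ell_\infty \in \bZ$. The energy absorbed at each $x \in S$ is quantized: a rescaling (bubbling) analysis at $x$ produces a nontrivial finite-energy ASD connection on $\bR^4$, which by conformal invariance and removable singularities descends to $S^4$ and hence carries energy at least $8\pi^2$, i.e.\ one full unit of charge. Because the total charge is exactly one, either $S = \emptyset$, giving the first alternative with $\widetilde{A_n} \to A \in \cM_1(X)$ in $C^{\infty}$, or $S = \{x\}$ is a single point absorbing the entire unit, forcing $\ell_\infty = 0$. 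In the latter case $A_\infty$ is a zero-energy ASD connection, hence flat, and flatness together with $\pi_1(X) = 1$ forces it to be the trivial product connection; the curvature measures $\abs{F_{A_n}}^2$ then converge weakly to $8\pi^2\,\delta(x)$, which is exactly the second alternative.

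The main obstacle is the analytic content concentrated in the middle and final stages: the local-to-global patching of Coulomb gauges with uniformly controlled transition functions, and the removable-singularity extension together with the bubbling energy-quantization estimate, are the substantial inputs, and each rests on Uhlenbeck's deep regularity and gauge-fixing results. The charge-one hypothesis is precisely what makes the concluding dichotomy sharp, since a single unit of energy leaves no room for partial bubbling or for more than one concentration point.
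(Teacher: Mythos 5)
Your outline is correct, but note that the paper itself offers no proof of this statement: it is quoted as Uhlenbeck's theorem directly from Donaldson--Kronheimer \cite{DK90}, preceded only by an intuitive summary. Your sketch (fixing the $\varepsilon_0$ threshold, finiteness of the concentration set, local Coulomb gauges with elliptic bootstrapping and patching, removable singularities, and bubbling energy quantization to force either $S=\emptyset$ or a single point carrying the full unit $8\pi^2$) is essentially the standard argument given in that reference, so there is nothing to reconcile between your route and the paper's.
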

Uhlenbeck compactness still holds in the setting of cylindrical-end four manifolds $X=X_0 \cup \Sigma \times [0,\infty) $. Suppose $\{[A_n]\} \subset \cM_1(X,\theta)$ is an infinite sequence of $g$-ASD connections on $X$ with $\sup \vv F_{A_n} \vv$ uniformly bounded. Then after passing to a subsequence again still denoted by $\{[A_n]\}$, we can still find a gauge equivalent sequence that converges on compact subsets, but since our manifold $X$ is non-compact, there is the possibility that curvature escapes down the cylindrical-end and this leads to the following weak convergence. This is the convergence without loss of energy (see \cite{MMR},6.3.3 or \cite{D02} 5.1) stated in our setting for the moduli space $\cM_{\ell}(X,\theta)$ which summarizes the compactness properties of a sequence of ASD connections. 

Let $[A_n]$ denote an infinite sequence of $g$-ASD connection in $\cM_{\ell}(X,\theta)$ with $\sup\vv F_{A_n}\vv$ uniformly bounded. Then there exists principal $SU(2)$ bundles $P(0),...,P(k)$ with $P(0) \rightarrow X$ and $P(1),...,P(k) \rightarrow \Sigma \times \bR$ together with $L^2$-finite $g$-ASD connections $A(i)$ on $P(i)$. 

There exits a sequence of positive real numbers $\{T_n(i)\}$ for $i=1...k$. with the following properties
\begin{itemize}
\item $T_n(1) < T_n(2) < ... < T_n(k)$ for all $n$,
\item $\lim_{n\rightarrow \infty} T_n(i) = \infty$ for $i=1,...,k$,
\item $\lim_{n\rightarrow \infty} (T_n(i)-T_n(i-1)) =\infty$ for $i=2,...,k$. 
\end{itemize}
\noindent
such that the following holds
\begin{enumerate}

\item $[A_n]$ converges in the $C^{\infty}$-topology on compact subsets of $X$ to $[A(0)]$ and the curvature densities $\vv \Tr F_{A_n}^2 \vv$ converge in the sense of measures on compact subsets to $\vv \Tr F_{A(0)}^2 \vv$.

\item the translations $[c_{T_n(i)}^{\ast} A_n\vv_{\Sigma \times \bR}]$ converge in the $C^{\infty}$-topology on compact subsets of $\Sigma \times \bR$ to $[A(i)]$ and the curvature densities $\vv \Tr F_{c_{T_n(i)}^{\ast} A_n}^2 \vv$ converge in the sense of measures on compact subsets to $\vv \Tr F_{A(i)}^2 \vv$ for $i:1,...,k$. Here $c_T:\Sigma \times \bR \rightarrow \Sigma \times \bR$ is the translation by $T$ in the ``time" factor.

\item (no loss of energy) $\ell=\sum_{i=0}^{k} \ell_i$ where $\ell_i=c_2(A(i))$ the second relative Chern number, with $\ell_0 \equiv CS(A(0)) \mod \bZ$ and $\ell_i \equiv CS(A(i+1))-CS(A(i))\mod \bZ$.

\item The limiting flat connections have compatible boundary values  $\partial_{\infty}([A(0)])=\partial^{-}_{\infty}([A(1)])$ and $\partial^{-}_{\infty}([A(i)])=\partial^{+}_{\infty}([A(i+1)])$ and $[A_k]=\theta$.

\end{enumerate}

Weak limits are defined as a tuple of gauge equivalence class of $L^2$-finite ASD connections $[A]:=([A_0],[A_1],\cdots,[\theta])$ where  $[A_0]\in \cM_{\ell_0}(X,\alpha_0)$ and $[A_i]\in \cM_{\ell_{i}}(\Sigma \times \bR,\alpha_{i-1},\alpha_i)$, $\alpha_i$ are flat connections on $\Sigma$ and have compatible boundary values $\partial_{\infty}(A_i)=\partial_{\infty}(A_{i+1})$. So the ``ends" of the moduli space $\cM_{1}(X,\theta)$ are parametrized by products of the form 
\begin{equation}
\cM_{\ell_0}(X,\alpha_0) \times \cM_{\ell_1}(\Sigma \times \bR, \alpha_0,\alpha_1) \times ... \times \cM_{\ell_k}(\Sigma \times \bR, \alpha_{k-1}, \theta).
\end{equation}
with $\sum_i \ell_i =1$.

The question of which points in $X$ parametrize the centers of highly concentrated curvature of ASD connections is provided by the Taubes map which constructs nearly anti-self dual connections on $X$ by grafting the standard instanton on the four sphere $S^4$ to the trivial product connection on $X$ using a cut-off function. This process disturbs anti-self duality but the self-dual part acquires a small uniform bound on the curvature and this allows for a small perturbation so that the resulting connection is anti-self dual. We outline the construction below, see \cite{T87} for details and also \cite{BKS90} for the equivariant case. 

Choose an oriented orthonormal frame at a point $x\in X$ to identify a neighborhood of $x$ with a small ball of radius $\lambda_0$. Since $X$ has bounded geometry we can make this choice of $\lambda_0$ independent of the point by letting it be less than the injectivity radius of $X$. Define a degree one map $f: X \rightarrow S^4$ which maps the point $x$ to the north pole and collapses everything outside the ball to the south-pole. Take the standard instanton $I$ on a principal $SU(2)$ bundle $Q$ with $c_2(Q)=1$ with concentration $1/\lambda$ at the north-pole for $\lambda \in (0,\lambda_0)$. Pulling back $I$ and extending it using a cut-off function to be trivially flat outside of the ball neighborhood gives a connection $A(x,\lambda)$ which satisfies the following bounds:
\begin{equation}
\Big( \int_X e^{\tau \delta} \vv F_A \vv^p \Big) ^{1/p} \leq (\text{const}) e^{\tau \delta/p}\lambda^{4/p-2}
\end{equation}
and the self dual part
\begin{equation}
\Big(\int_X  e^{\tau \delta}\vv F_A^{+} \vv^p \Big)^{1/p} \leq (\text{const}) e^{\tau \delta/p}\lambda^{2/p}
\end{equation}
a different choice of orthonormal frame gives a gauge-equivalent connection. The Taubes map is given by $T: X \times (0,\lambda_0) \rightarrow \cB(P)$ sending $(x,\lambda)$ to $[A(x,\lambda)]$. Consider a perturbation term $A(x,\lambda)+a(x,\lambda)$ with the condition that $F_{A+a}^{+}=0$, this leads to the following equation
\begin{equation}
0=F_A^{+}+d_A^{+}a+(a\wedge a)^+
\end{equation}
and since the operator $d_A^+$ is not elliptic, the condition $a=d_A^{*}u$ for some $u \in \Omega_X^{+}(\ad P)$ is imposed. Now the equation to be solved is:
\begin{equation}
d_A^{+}d_A^{*}u=-F_A^{+}-(d_A^{*}u \wedge d_A^{*}u)^+
\end{equation}
for a smooth solution $u$ which then gives an ASD connection $\widetilde{A}(x,\lambda)=A(x,\lambda)+a(x,\lambda)$ where $a=d_A^*u$. Note that the linearized equation $d_A^+d^{\ast}_A u =-F_A^+$ has a solution if and only if the first eigenvalue of $d^+_Ad^{\ast}_A$ is positive (\cite{Lawson85},\cite{FU91}). It is at this stage that the intersection form of $X$ being negative definite plays a role. The original equation is solved by an iterative scheme (see \cite{Lawson85}, \cite{taubes82}). Define a map $\widetilde{T}: X \times (0, \lambda_0) \rightarrow \cM_1(X,\theta)$ sending $[A(x,\lambda)]$ to $[\widetilde{A}(x,\lambda)]$. 
\begin{prop}[\cite{T87}]
There exists $\delta_1 > 0$ such that for any $\delta \in (0,\delta_1)$, the moduli space $\cM_1(X,\theta)$ is non-empty. There is an open set $K \subset \cM_1(X,\theta)$ such that for small $\lambda_0$, $K$ is diffeomorphic to $X \times (0,\lambda_0)$ and isotopic in $\cB$ to the image of the Taubes map $\widetilde{T}$.
\end{prop}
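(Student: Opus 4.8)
The plan is to solve the nonlinear anti-self-duality equation perturbatively around the grafted connection $A = A(x,\lambda)$ by a contraction-mapping argument, and then to analyze the resulting correction map $\widetilde{T}$ at the level of the moduli space. The grafting construction already supplies, for each $(x,\lambda) \in X \times (0,\lambda_0)$, a connection whose self-dual curvature is controlled by $\lambda^{2/p}$ in the weighted norm; what remains is to correct $A$ to a genuine ASD connection by solving $d_A^+ d_A^* u = -F_A^+ - (d_A^* u \wedge d_A^* u)^+$ for small $u$ in the appropriate weighted Sobolev space, and to verify that the correction is so small that the topology of the image is unchanged.

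First I would establish the analytic cornerstone: a uniform lower bound on the smallest eigenvalue of the Laplacian $d_A^+ d_A^*$ acting on self-dual forms in $\Omega^2_{1,\delta,+}(X,\ad P)$, valid for all grafted connections $A(x,\lambda)$ as $\lambda \to 0$ and uniformly in $x \in X$. On the compact piece $X_0$ this follows from the negative-definiteness of the intersection form, which forces $b_2^+(X_0)=0$ and hence the absence of $L^2$ self-dual harmonic forms at the trivial flat connection; since grafting perturbs the operator only on a small ball, the bound persists for $\lambda$ small. The delicate point is the cylindrical end, where one must use the weighted norm with decay rate $\delta$ together with exponential convergence to the nondegenerate trivial connection $\theta$ to prevent small eigenvalues from escaping down the end. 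Choosing $\delta_1$ below the spectral gap of $L_\theta$ guarantees that $d_A^+ d_A^*$ is invertible with a uniformly bounded Green's operator $G_A$ for every $\delta \in (0,\delta_1)$. I expect this uniform eigenvalue estimate to be the main obstacle, since it couples the global definiteness input on $X_0$ with the Fredholm and weighted-analysis input on the end.

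With $G_A$ in hand, I would recast the equation as the fixed-point problem $u = -G_A\bigl(F_A^+ + (d_A^* u \wedge d_A^* u)^+\bigr)$ and check that it defines a contraction on a small ball. The quadratic term is controlled by the Sobolev multiplication estimate $\norm{(d_A^* u \wedge d_A^* u)^+} \le c\,\norm{d_A^* u}^2$, while the inhomogeneous term is small by the grafting bound $\norm{F_A^+} \le (\text{const})\,\lambda^{2/p}$. For $\lambda_0$ sufficiently small the Banach fixed-point theorem yields a unique small solution $u(x,\lambda)$, smooth in $(x,\lambda)$, hence an ASD connection $\widetilde{A}(x,\lambda) = A(x,\lambda) + d_A^* u$; this shows $\cM_1(X,\theta)$ is non-empty and defines the map $\widetilde{T}$.

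Finally, I would show $\widetilde{T}$ is an embedding onto an open set $K$. Injectivity for small $\lambda_0$ follows because the concentration point $x$ and scale $\lambda$ can be recovered from the curvature density of $\widetilde{A}$, which is $C^0$-close to that of $A$; openness and the diffeomorphism $K \iso X \times (0,\lambda_0)$ follow from the implicit function theorem after checking that the linearization of $\widetilde{T}$ is injective, the formal dimension $\dim \cM_1(X,\theta) = 8 - 3 = 5$ matching $\dim\bigl(X \times (0,\lambda_0)\bigr)$. The isotopy in $\cB$ between $K$ and the image of the Taubes map $T$ is produced by the path $s \mapsto [A(x,\lambda) + s\,d_A^* u(x,\lambda)]$ for $s \in [0,1]$, which remains in $\cB$ because $d_A^* u$ is uniformly small, interpolating continuously between the grafted connection and its ASD correction.
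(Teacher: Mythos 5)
Your proposal follows essentially the same route as the paper, which states this proposition as Taubes' theorem \cite{T87} and sketches exactly the argument you give: graft the standard instanton to get $A(x,\lambda)$ with $\norm{F_A^+}$ small, use the positivity of the first eigenvalue of $d_A^+d_A^*$ (the point where negative definiteness of $Q_{X_0}$ enters, adapted to the weighted cylindrical-end setting) to solve $d_A^+d_A^*u = -F_A^+ - (d_A^*u \wedge d_A^*u)^+$ by iteration/contraction, and then check the corrected connections form a collar isotopic in $\cB$ to the image of the Taubes map. Your identification of the uniform eigenvalue estimate as the analytic cornerstone, and your handling of the weight $\delta$ relative to the spectral gap of $L_\theta$, match the paper's setup, so the proposal is correct and not a genuinely different proof.
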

Here $\delta$ denotes the $\delta$-decay used in the Sobolev completion. Since $\pi$ acts by orientation preserving isometries and the metric is $\pi$-invariant we have by construction
\begin{equation}
t \cdot \widetilde{T}(x,\lambda)=\widetilde{T}(t \cdot x,\lambda).
\end{equation}
This gives an equivariant collar in the moduli space and a partial compactification 
\begin{equation}
\overline{\cM}_1(X,\theta)=\cM_1(X,\theta) \cup X \times (0,\lambda_0)
\end{equation}
consisting of ASD connections with highly concentrated curvature. In particular, the equivariant moduli space $\cM_1(X,\theta)$ is non-empty when $X^\pi$ is non-empty. For connections $[A] \in X \times (0,\lambda_0)$ Taubes also gives that $H_A^2=0$ \cite[Theorem 3.38, pg. 81]{Lawson85} so that a neighborhood of the collar is smooth $5$-manifold and these connections are irreducible. The fixed set $X^{\pi}$ give rise a family of ASD connections which correspond to an equivariant lifts of the $\pi$-action on $X$ to a $\widetilde{\pi}=\bZ/2p$-action on the principal $SU(2)$-bundle. 

\begin{figure}[ht]
\centering
\includegraphics[scale=.45]{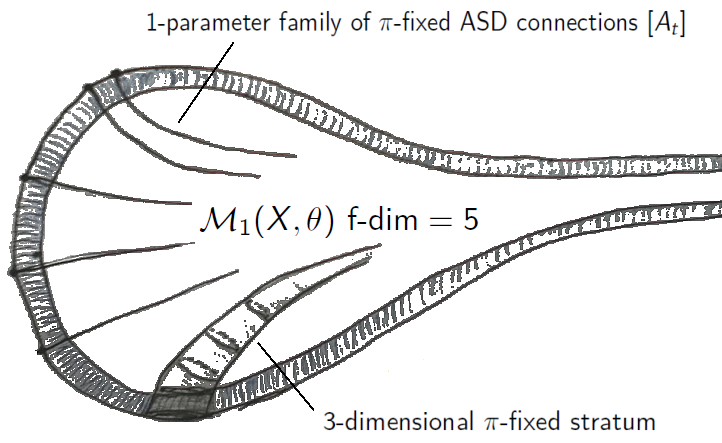}            
\caption[The equivariant instanton-one moduli space on cylindrical-end four manifolds.]{The instanton one moduli space $\cM_1(X,\theta)$ of ASD connections asymptotic to the trivial flat connection and one unit of total Yang-Mills energy has a collar end as in the closed definite case. The fixed set propagates a family of fixed ASD connections into the moduli space, each corresponding to equivariant lifts of the $\pi$-action to the principal $SU(2)$-bundle which leave a one-parameter(respectively three-parameter) family of connections $\pi$-invariant. The equivariant lifts can be determined by pulling-back an equivariant bundle on the four sphere at the ideal boundary.}
\end{figure} 

We give an outline below, for more details see \cite{BM93} or \cite{HL92}. Let $\Diff(X)$ denote the group of diffeomorphisms of $X$ then there is an exact sequence:
\begin{equation}
1 \rightarrow \cG(P) \rightarrow \Aut(P) \rightarrow \Diff(X)
\end{equation}
where $\Aut(P)$ are bundle automorphisms of $P$ which do not necessarily cover the identity. A lift of the $\pi$-action to $P$ is a homomorphism $\pi \rightarrow \Aut(P)$ which projects back to $\pi$ under the above map to $\Diff(X)$. Let $\cG(\pi) \subset \Aut(P)$ denote the bundle automorphisms which cover the $\pi$-action on $X$, then we also have an exact sequence:
\begin{equation}
1 \rightarrow \cG(P) \rightarrow \cG(\pi) \rightarrow \pi \rightarrow 1
\end{equation}
The natural action of $\cG(\pi)$ on $\cA(P)$ is well-defined up to gauge transformations so we get an induced action of $\cG(\pi)/\cG(P)=\pi$ on $\cB(P)$. Let $[A]$ denote a $\pi$-fixed connection in $\cB(P)$ then the following sequence is exact:
\begin{equation}
1 \rightarrow \cG_A \rightarrow \cG_A(\pi) \rightarrow \pi \rightarrow 1
\end{equation}
where $\cG_A$ denotes the stabilizer of $A$ in the gauge group and $\cG_A(\pi)$ denotes the stabilizer in $\Aut(P)$. There then exists a lift of the action on $X$ to the principal bundle leaving the connection $A$ invariant if and only if the above sequence splits. For irreducible $SO(3)$ connections the existence and uniqueness of a lift follows since $\cG_{A}$ is trivial, but for irreducible $\pi$-fixed connections $[A]$ we have $\cG_A=\{\pm 1\}$ and so we have $\cG_A(\pi)$ is either $\bZ/2p$ or $\bZ/p \times \bZ/2$. So for $p$ is odd we can consider the double cover $\widetilde{\pi}=\bZ/2p$ acting on the bundle which covers the $\pi$-action on $X$ and leaves the connection $A$ $\pi$-invariant. Choosing a different representative in $[A]$ gives an equivalent lift and if $I$ is a set parametrizing equivalence classes of lifts then we get disjoint union of the fixed set 
\begin{equation}
\Fix(\cB^{*},\pi)=\bigsqcup_{i \in I} \cA_i^{*}/\cG_i=\bigsqcup_{i \in I}\cB^{*}_i
\end{equation} 
where $\cA_i$ are $i$-invariant connections and $\cG_i$ are $i$-invariant gauge transformations. The fixed sets may intersect at a reducible where $\cG_A \neq 1$ there may be more than one lift of the action leaving $A$ invariant. In our application there are no reducible ASD connections \cite[p. 403]{T87}.
\begin{thm}[\cite{F89},\cite{BM93}]
The image of $\cB_i$ in $\cB$ is closed, $\cB^{*}_i$ is a closed smooth submanifold of $\cB^{*}$.
\end{thm}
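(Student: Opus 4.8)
The plan is to treat both assertions through the equivariant slice theorem for the action of the lifted group $\widetilde{\pi}=\bZ/2p$ on the space of connections, supplemented by a convergence argument in the weighted Sobolev topology to obtain closedness. Throughout I would work on the irreducible locus $\cA^{*}$, where the gauge group $\cG$ acts freely modulo its center $\{\pm 1\}$, so that $\cB^{*}=\cA^{*}/\cG$ is a smooth Hilbert manifold with Coulomb charts modeled on the slice $S_A=\ker d_{A,\delta}^{*}\subset \Omega^1_{2,\delta}(X,\ad P)$. The point of restricting to $\cB^{*}$ is precisely that the stabilizer $\cG_A$ collapses to $\{\pm 1\}$, which is what makes the lift data rigid.

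First I would record that $\Fix(\cB,\pi)$ is closed in $\cB$, being the common solution set of the continuous conditions $[\,t^{*}A\,]=[A]$ for $t\in\pi$; passing to limits is justified by the weighted-norm slice theory, since the based gauge group acts properly and finite-energy connections decay exponentially on the end. To separate this fixed set into the pieces $\cB_i$, note that on $\cA^{*}$ the equality $\cG_A=\{\pm 1\}$ forces the splitting of $1\rightarrow \cG_A \rightarrow \cG_A(\pi)\rightarrow \pi \rightarrow 1$ to be unique up to the center, so the equivalence class $i$ of the lift is locally constant along $\Fix(\cB^{*},\pi)$. As there are only finitely many lift types (indexed by the rotation data at the fixed points), the $\cB_i^{*}$ are simultaneously open and closed in $\Fix(\cB^{*},\pi)$, hence closed in $\cB^{*}$. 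Distinct lifts can be identified only over a reducible connection, so in our situation, where $X$ carries no reducible ASD connections, the images of the $\cB_i$ in $\cB$ are closed as well.

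Second, for the submanifold structure I would manufacture $\widetilde{\pi}$-equivariant Coulomb charts. Fixing $[A]\in\cB_i^{*}$ with $\widehat{t}^{*}A=A$ for the chosen lift, the pullback by the isometric action commutes with $d_A$ and with $d_{A,\delta}^{*}=e^{-\delta\tau}d_A^{*}e^{\delta\tau}$, the latter because both the metric and the cutoff $\tau$ are $\pi$-invariant. Thus $\widetilde{\pi}$ acts linearly on the slice $S_A$ while preserving the Coulomb condition, and the slice theorem supplies a $\widetilde{\pi}$-equivariant diffeomorphism from a neighborhood of $0\in S_A$ onto a neighborhood of $[A]$ in $\cB^{*}$, intertwining the linear action on $S_A$ with the induced $\pi$-action on $\cB^{*}$. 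The fixed set of a linear representation of the finite group $\widetilde{\pi}$ on the Hilbert space $S_A$ is the closed linear subspace $S_A^{\widetilde{\pi}}$; transporting it through the chart displays $\cB_i^{*}$ near $[A]$ as the smooth submanifold modeled on $S_A^{\widetilde{\pi}}$. Since $[A]$ was arbitrary and the set is closed by the previous step, $\cB_i^{*}$ is a closed smooth submanifold of $\cB^{*}$.

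The main obstacle I anticipate is the analytic input required to make the slice theorem and the limiting argument work on the cylindrical end: one must verify that $\ker d_{A,\delta}^{*}$ can be chosen $\widetilde{\pi}$-invariantly and that the inverse-function-theorem chart is genuinely equivariant, which rests on the $\pi$-invariance of the weighted norm $L^2_{k,\delta}$ and on the Fredholmness of $D_{A,\delta}$ for $\delta$ off the spectrum of $L_{\alpha}$. The delicate point is ensuring that limits taken in the weighted topology remain irreducible and keep the same lift, rather than bubbling or drifting toward a reducible; here this is controlled by the exponential decay of finite-energy ASD connections and, in the present application, by the absence of reducible ASD connections on $X$.
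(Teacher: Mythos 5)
Your handling of the second assertion is sound and follows the same route as the sources the paper cites for this theorem (Furuta, Braam--Matic; the paper itself offers no proof, only the surrounding setup): an equivariant Coulomb slice at a fixed irreducible $[A]$, the linear action of the lift on $S_A=\ker d_{A,\delta}^{*}$, and the identification of $\cB_i^{*}$ near $[A]$ with the closed linear subspace $S_A^{\widetilde{\pi}}$. Note that this chart is also what actually proves your first-paragraph claim of local constancy: uniqueness of the splitting at a single connection does not by itself say that nearby $\pi$-fixed classes carry an equivalent lift; it is the equivariant chart that shows every $\pi$-fixed class near $[A]$ already lies in $\cB_i^{*}$. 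Read in that order, your clopen-partition argument does give that $\cB_i^{*}$ is a closed smooth submanifold of $\cB^{*}$.

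The genuine gap is in the first assertion, the closedness of the image of $\cB_i$ in $\cB$. Your argument lives entirely on the irreducible locus, so it only yields closedness of $\cB_i^{*}$ in $\cB^{*}$; the remaining case is a sequence $[A_n]$ in the image of $\cB_i$ converging in $\cB$ to a \emph{reducible} class $[A]$, and this cannot be dismissed by invoking the absence of reducible ASD connections. That hypothesis concerns the moduli space $\cM_1(X,\theta)$, whereas the theorem is a statement about the full configuration space $\cB$, where reducibles are always present, the ASD equation plays no role, and reducible classes are exactly where the paper notes the different $\cB_j$ may intersect --- so they are the critical limit points, not an excludable degenerate case. The missing step is the compactness argument of Furuta and Braam--Matic: after gauging so that $A_n \to A$ in $\cA$, the invariance of $A_n$ under a lift conjugate to the given one reads $w_n^{*}(\widehat{t}^{\,*}A_n)=A_n$ for gauge transformations $w_n$ (with $\widehat{t}$ a fixed reference bundle map covering $t$); elliptic estimates and bootstrapping give a subsequence $w_n \to w$, so $\widehat{t}w$ generates a lift of $\widetilde{\pi}$ leaving $A$ invariant, and since the data classifying lifts (the isotropy representations over the fixed points) are discrete and conjugation-invariant, the limiting lift is equivalent to the original one, placing $[A]$ in the image of $\cB_i$. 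Without this convergence argument for the implementing gauge transformations --- which is valid at reducibles and is the heart of the cited proofs --- the first half of the theorem remains unproven.
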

\noindent
We will study the $\pi$-equivariant compactification of the fixed-set $\cM_1(X,\theta)^{\pi}$ which originates in the Taubes collar to obtain information about the original $\pi$-action on $X$.

\section{Equivariant General Position}

In the non-equivariant setting, the argument in Freed and Uhlenbeck can be adapted to show that for a Baire set of metrics $g$ which restrict to a product metric on the $\End(X)$ the moduli space $\cM_1(X,\theta)$ is a smooth $5$-dimensional manifold. In the equivariant setting we have a theorem of Cho \cite{cho91} on the existence of a Baire set of $\pi$-invariant metrics on $X$ such that all the components of the fixed set $\cM_1(X,\theta)^{\pi}$ are either empty or smooth manifolds. This $\pi$-invariant version of Freed and Uhlenbeck is also valid on cylindrical-end $4$-manifolds (it was used in \cite{BKS90}). Even though $\cM_1(X,\theta)^{\pi}$ is smooth if non-empty, it may not have smooth $\pi$-invariant neighborhoods and in general the surrounding moduli space can be highly singular. The next possible approach would be to perturb the anti-self duality equations at the chart level by passing to the Kuranashi model
\begin{equation}
\phi: H_A^1 \rightarrow H^2_A 
\end{equation}
as in Donaldson \cite{D83}. In the equivariant case $H_A^1$ and $H_A^2$ are finite dimensional real $\pi$-representation spaces and the obstruction to the existence of an equivariant perturbation is 
\begin{equation}
[H_A^1] - [H^2_A] \in R^+(\pi) 
\end{equation}
being an actual real representation. Hambleton and Lee in \cite{HL92} applied the theory of equivariant general position of Bierstone \cite{Bie77} to equivariant moduli spaces. Bierstone's approach is to first consider defining the problem of equivariant transversality at the origin of $W$ with respect to $0 \in V$ of a smooth, $f:V \rightarrow W$ two $G$-vector spaces $V$ and $W$. The space $\cC^{\infty}_{G}(V,W)$ of smooth $G$-equivariant maps is a module over the ring $\cC^{\infty}_{G}(V)$ of smooth $G$-invariant functions on $V$. Then there exists a finite set of polynomial generators $F_1,\cdots F_k$ of $\cC^{\infty}_{G}(V,W)$ so that
\begin{equation}
f(x)=\sum^{k}_{i=1} h_i(x)F_i(x)
\end{equation}
where $h_i(x) \in \cC^{\infty}_{G}(V)$. We can write this slightly differently as 
\begin{equation}
f(x)=U\circ \text{graph}(h(x))
\end{equation}
where $U: V \times \bR^{k} \rightarrow \bR$ defined by $U(x,h)=\sum_{i=1}^{k}h_iF_i(x)$  and $\text{graph}(h):V\rightarrow V \times \bR^k$ defined by $\text{graph}(h(x))=(x,h_1(x),\cdots, h_k(x))$. Then $f^{-1}(0)=U^{-1}(0) \cap \text{graph}(h)$.
The affine algebraic variety $U^{-1}(0)$ has a natural $\pi$-equivariant minimum Whitney stratification. Let $f:V \rightarrow W$ be a smooth map between $G$-vector spaces, then $f$ is in \textbf{$G$-equivariant general position} with respect to $0 \in W$ at $0 \in V$ if the graph of $h$ is stratum-wise transverse to the affine algebraic variety $U^{-1}(0)$ at $0 \in V$. This notion is well-defined in the sense that it does not depend on the choice of generators $F_i$ and $h_i$.  The subspace of smooth equivariant maps in general position with respect to $0$ is open and dense in $\cC^{\infty}_{G}(V,W)$ with respect to the $C^{\infty}$-topology. This result is extended to the infinite dimensional setting by Hambleton and Lee \cite{HL92}.
%
In our case we adapt Floer's method \cite[2c]{Floer88} to make equivariant perturbations of the ASD equations over a cylindrical-end four manifold $X$ to obtain moduli spaces in Bierstone general position (see \cite{HL92} for the case of a closed 4-manifold $X$ and chart-by-chart perturbations).

For these we use Wilson loop perturbations in free $\pi$-orbits of embedded circles in $X$. The non-equivariant case is described in \cite[p. 400-401]{Don87}. We will review the construction of these equivariant perturbations below with notation from \cite[p. 129-130]{Nikolai}. Let $\gamma: S^1 \times D^3 \rightarrow X$ be an embedded loop in $X$ which is slightly thickened. Given a connection $A$ and $x\in \gamma(S^2 \times D^3)$ let $\Hol_A(\gamma,x)$ denote the holonomy around the loop $\gamma_x$ parallel to $\gamma$. If we denote by $\Pi: SU(2) \rightarrow \underline{\textit{su}}(2)$ the map $u \mapsto u-\dfrac{1}{2}tr(u)\Id$ then $\Pi \Hol_A(\gamma)$ defines a section on $\ad(P)$ over $\gamma(S^1\times D^3)$. If $\omega$ is $2$-form compactly supported in $\gamma(S^1\times D^3)$ then 
\begin{equation}
\eta(\omega,\gamma,A)=\omega \otimes \Pi \Hol_A(\gamma) \in \Omega^{2}(X,\ad P)
\end{equation}
Now given a finite set of embedded loops $\gamma_i$ and $2$-forms $\omega_i$ for $i=1\ldots m$ then define a linear combination
\begin{equation}
\sigma(A)=\sum_i^m \varepsilon_i \eta(\omega_i,\gamma_i,A) 
\end{equation}
and consider the perturbed ASD equations
\begin{equation}
F_A=-\ast F_A + \sigma_{+}(A).  
\end{equation}
where $\sigma_{+}(A)$ is the orthogonal projection onto $\Omega_+^2(X,\ad P)$. In the equivariant setting we use $\pi$-orbits of $m$ freely embedded loops $\gamma_i$ and consider 
\begin{equation}
\sigma(A)=\sum_{i=1}^{m}\sum_{s \in \pi} \varepsilon_i \eta((s^{-1})^{\ast}\omega_i,s(\gamma_i),A)
\end{equation} 
and define 
\begin{equation}
\widehat{\sigma}(A)=\sum_{t \in \pi} (t^{-1})^{\ast}\sigma(t^{\ast}A).
\end{equation}
The perturbed section $F_A^+ + \widehat{\sigma}_+(A)$ is now $\cG(\pi)$-equivariant and so the perturbed moduli space inherit a $\pi$-action as before.

Since Bierstone general position is an open-dense condition, a generic equivariant perturbation of the ASD equations give the moduli spaces the structure of a Whitney stratified space (see \cite{Bie77} or \cite{HL92} for details). 

\section{Proof of Theorem A}

We begin with a lemma to determine the equivariant bundle structure using the Taubes map in terms of the weights of the isotropy representation over the fixed-points. These weights $\pm \lambda$ are given by a representation $\widetilde{\pi} \rightarrow SU(2)$ over a fixed point:
\begin{equation}
\widetilde{t} \mapsto \begin{pmatrix} \widetilde{t}^{\lambda} &  \\  & \widetilde{t}^{-\lambda} \end{pmatrix}
\end{equation}
for a lift of the $\pi$ action on $X$ to a $\widetilde{\pi}=\bZ/2p$ action on the principal $SU(2)$ bundle, where $\widetilde{t}$ is the generator.

\begin{lem}
If a fixed point has rotation numbers $\bC^2(a,b)$ then the the equivariant lift it generates in the moduli space has isotropy representations over the fiber of this point given by $\bZ/2p$-weights $\pm (b-a)$ and over the other fixed points $\pm (a+b)$. Similarly for the fixed $2$-spheres.
\end{lem}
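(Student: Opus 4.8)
The plan is to compute the equivariant bundle lift at the ideal boundary and pull it back, exactly as indicated in the figure caption: over a fixed point where curvature concentrates, the Taubes map grafts the standard charge-one instanton on $S^4$, so the isotropy representation on each fiber is read off from an equivariant lift of the \emph{linearized} $\pi$-action to the standard instanton bundle over $S^4$. I would first model $S^4=\mathbb{HP}^1$ with the standard instanton realized as the quaternionic Hopf bundle $S^7\to\mathbb{HP}^1$, a principal $Sp(1)=SU(2)$-bundle with $Sp(1)$ acting on the right. Identifying $\mathbb{H}\cong\bC^2$ by taking left multiplication by $i$ as the complex structure, $q=w_1+w_2 j\mapsto(w_1,w_2)$, I would rewrite the linear rotation action $\bC^2(a,b)$, namely $(w_1,w_2)\mapsto(\zeta^a w_1,\zeta^b w_2)$ with $\zeta=e^{2\pi i/p}$, as a two-sided quaternionic multiplication $q\mapsto e^{i\theta}q\,e^{i\phi}$. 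A short computation using $j\,e^{i\phi}=e^{-i\phi}j$ yields $e^{i(\theta+\phi)}w_1+e^{i(\theta-\phi)}w_2 j$, hence $\theta=\pi(a+b)/p$ and $\phi=\pi(a-b)/p$.

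Next I would lift to homogeneous coordinates. The left action $(q_0,q_1)\mapsto(e^{-i\phi}q_0,\,e^{i\theta}q_1)$ on $\mathbb{H}^2\setminus 0$ commutes with the right $Sp(1)$-action and induces $q=q_1q_0^{-1}\mapsto e^{i\theta}q\,e^{i\phi}$ on the affine chart, so it descends to $\mathbb{HP}^1$ and lifts canonically to the Hopf bundle; its order is $2p$ precisely when $a+b$ is odd, which is exactly the $\widetilde\pi=\bZ/2p$ phenomenon recorded earlier. I then evaluate the induced representation $\widetilde\pi\to SU(2)$ on the fibers over the two fixed points $N=[1:0]$ and $S=[0:1]$. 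Since each fiber is an $Sp(1)$-torsor, the isotropy element at the standard point is well defined up to conjugacy: over $N$ the point $(1,0)$ satisfies $(e^{-i\phi},0)=(1,0)\cdot e^{-i\phi}$, so the isotropy element is $e^{-i\phi}$, while over $S$ it is $e^{i\theta}$. Under $Sp(1)=SU(2)$ left multiplication by $e^{i\alpha}$ is conjugate to $\mathrm{diag}(e^{i\alpha},e^{-i\alpha})$; writing $\widetilde t=e^{i\pi/p}$ we get $e^{-i\phi}=\widetilde t^{\,b-a}$ and $e^{i\theta}=\widetilde t^{\,a+b}$, giving weights $\pm(b-a)$ over $N$ and $\pm(a+b)$ over $S$.

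Finally I transfer back to $X$. Taubes' collapse map sends the concentration point $x$ to the north pole $N$ and the entire exterior of the small ball to the south pole $S$, so the fiber over $x$ inherits weight $\pm(b-a)$. Over the connected flat region $X$ minus the ball the grafted connection is the trivial product connection, and any bundle automorphism covering the generator $t$ and preserving the product connection must be a single constant element of $SU(2)$ in the product trivialization; matching this constant to the south-pole value of the $S^4$-model forces it to be $e^{i\theta}$, so every remaining fixed point acquires weight $\pm(a+b)$. The fixed $2$-spheres are handled identically, concentrating the instanton at a point of the sphere whose tangential rotation numbers are $(0,c)$, with $c$ the normal rotation number; both formulas then collapse to weight $\pm c$.

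I expect the main obstacle to be this transfer step. One must argue rigorously that the globally grafted equivariant connection agrees, as an equivariant bundle-with-connection near the ideal boundary, with the pull-back of the $S^4$-model, so that the south-pole datum genuinely governs all fibers over the flat region; and one must track the residual $\pm$ (equivalently, the central $\{\pm 1\}$) ambiguity consistently so that the $\bZ/2p$- versus $\bZ/p$-lift distinction is recorded correctly. By contrast, the quaternionic bookkeeping producing $\theta$ and $\phi$ is routine once the complex-structure convention is fixed.
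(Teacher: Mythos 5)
Your proposal is correct and follows essentially the same route as the paper: the paper's proof likewise chooses a $\pi$-invariant disk about the fixed point, uses the equivariant Taubes collapse map $f\colon X \to S^4$, and pulls back an equivariant $\widetilde{\pi}$-bundle $Q$ over $S^4$ with $c_2(Q)=1$ together with its invariant instantons, citing Fintushel--Lawson for the equivariant bundle structure. The only difference is that you carry out by hand, via the quaternionic Hopf model, the weight computation (isotropy $e^{-i\phi}=\widetilde{t}^{\,b-a}$ at the concentration pole and $e^{i\theta}=\widetilde{t}^{\,a+b}$ at the opposite pole) that the paper delegates to that reference, and your "transfer step" worry is resolved automatically because the equivariant bundle on $X$ \emph{is} the pullback $f^{*}Q$, with the lift unique up to the center $\{\pm 1\}$ for irreducible connections, which does not affect the weights $\pm(b-a)$, $\pm(a+b)$.
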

\begin{proof}
Choose a $\pi$-invariant disk around the fixed point and use the map $f:X \rightarrow S^4$ in the Taubes construction. We can then pull-back a equivariant $\widetilde{\pi}$-bundle $Q$ over the four sphere with $c_2(Q)=1$ \cite{FL86}, together with its one-parameter family of $\pi$-invariant instantons. 
\end{proof}
Suppose there are at least three fixed points of the $\pi$-action $p_i$, say with rotation numbers $(a_1,b_1), (a_2,b_2), (a_3,b_3)$. Each of these fixed points lies at the Taubes collar of the moduli space and is part of a $\pi$-fixed arc $\gamma_i$. We would like to show that none of these arcs can connect with each other in the irreducible component of the moduli space.
\begin{lem}[\cite{HL95}]
If the $\pi$-action has at least $3$ fixed points then the $\gamma_i$ represent distinct equivariant bundle structures and are therefore disjoint in $\cM^{\ast}_1(X,\theta)$.
\end{lem}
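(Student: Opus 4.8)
The plan is to deduce disjointness from the decomposition of the fixed set into equivariant bundle types, and then to separate the arcs by their isotropy weights. Recall from the decomposition $\Fix(\cB^{*},\pi)=\bigsqcup_{i\in I}\cB^{*}_i$ that the fixed set of $\cB^{*}$ is the disjoint union of the fixed loci of the distinct equivalence classes of lifts of the $\pi$-action to $P$, and that each $\cB^{*}_i$ is a closed smooth submanifold. An equivalence class of lift is a discrete datum, hence locally constant along any connected subset of the fixed set; in particular it is constant along each arc $\gamma_i$ and equals the class computed at its collar endpoint by the previous lemma. Therefore it suffices to show that the lifts carried by $\gamma_i$ and $\gamma_j$ lie in distinct classes for $i\neq j$: they then belong to different pieces of the disjoint union, and since those pieces are disjoint we conclude $\gamma_i\cap\gamma_j=\emptyset$ inside $\cM^{*}_1(X,\theta)\subset\cB^{*}$.

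To distinguish the classes I would use the isotropy weights over the fixed points, which are gauge-invariant and hence genuine invariants of the equivalence class of a lift. By the previous lemma, the lift generated by the fixed point $p_i$ with rotation numbers $(a_i,b_i)$ has weight $\pm(b_i-a_i)$ over the fiber at $p_i$ and weight $\pm(a_i+b_i)$ over the fiber at every other fixed point. Thus the full weight datum of $\gamma_i$, read over all the (at least three) fixed points, is completely determined by $(a_i,b_i)$, and two arcs represent the same bundle structure only if these weight data agree at every fixed point, up to the sign ambiguity $\lambda\leftrightarrow-\lambda$ and modulo $2p$.

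The heart of the argument is then an arithmetic comparison, and here the hypothesis of three fixed points is decisive. Suppose $\gamma_i$ and $\gamma_j$ with $i\neq j$ carried the same bundle structure, and choose a third fixed point $p_k$ with $k\neq i,j$. Reducing the weights modulo $p$, which suffices since the rotation numbers are defined modulo $p$, matching over $p_k$ forces $a_i+b_i\equiv\pm(a_j+b_j)\pmod p$, while matching over $p_j$ forces $a_i+b_i\equiv\pm(b_j-a_j)\pmod p$. Combining the two gives $a_j+b_j\equiv\pm(b_j-a_j)\pmod p$, whence $2a_j\equiv 0$ or $2b_j\equiv 0\pmod p$; since $p$ is odd this means $a_j\equiv 0$ or $b_j\equiv 0\pmod p$. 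But the action near an isolated fixed point is free on the linking $3$-sphere, so $\gcd(a_j,p)=\gcd(b_j,p)=1$, a contradiction. Hence the arcs carry distinct bundle structures and are disjoint.

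I expect the main obstacle to be bookkeeping rather than conceptual: one must verify that the fixed-point weight collection really is an invariant adequate to separate the classes $\cB^{*}_i$, and handle the $\pm$ sign ambiguity in the weights together with the distinction between reductions modulo $p$ and modulo $2p$. I would also take care to confirm that three fixed points is genuinely necessary — with only two fixed points the comparison over the auxiliary point $p_k$ is unavailable and no contradiction arises — which is exactly consistent with the statement's hypothesis.
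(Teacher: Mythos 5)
Your proposal is correct and takes essentially the same approach as the paper: both arguments rest on the weight data of the preceding lemma and the presence of a third fixed point to force $a+b\equiv\pm(b-a)\pmod p$, hence $2a\equiv 0$ or $2b\equiv 0\pmod p$, impossible for odd $p$ with rotation numbers prime to $p$. The only minor difference is organizational — the paper supposes an arc connects $p_1$ and $p_2$, derives the canceling-pair relation $(a_2,b_2)=(a_1,-b_1)$ from normal-bundle propagation along that arc, and then compares isotropy at $p_3$, whereas you compare the weights of the two putatively equivalent lifts at the two points $p_j$ and $p_k$ directly — but the arithmetic contradiction is identical.
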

\begin{proof}
Suppose $\gamma$ connects $p_1$ and $p_2$, the normal bundle information is propagated along this oriented arc and gives a canceling pair so that $(a_2,b_2)=(a_1,-b_1)$. We will use the presence of the third distinct fixed point $p_3$ to get a contradiction. Because the point $p_1$ is fixed, there is an $\pi$-invariant ball $B(p_1)$ with a linear action so this allows us to construct an equivariant degree one map $f_1:X \mapsto S^4$, now we can pull-back the equivariant bundle structure $Q \mapsto S^4$ via $f_1$ and get an equivariant bundle $(X,f_1^{\ast}Q)$. Similarly, we can do this with a map $f_2$ about the point $p_2$, this gives an equivariant bundle structure $(X,f_2^{\ast}Q^{\prime})$. Since these bundle structures are equivalent, the isotropy at $p_3$ has to agree and a comparison shows that either $2a\equiv 0 \pmod p$ or $2b\equiv 0 \pmod p$, in either case we get a contradiction. 
\end{proof}
\noindent
The following lemma deals with the case of a fixed $2$-sphere and its $3$-dimensional stratum.
\begin{lem}
If a fixed $2$-sphere in $X_0$ represents a non-trivial homology class with non-zero self-intersection, then the $3$-dimensional $\pi$-fixed stratum generated at the Taubes boundary cannot bound off in $\cM_1^{\ast}(X,\theta)$.
\end{lem}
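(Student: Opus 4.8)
The plan is to obstruct the closing-up of the $3$-dimensional stratum by an equivariant Euler class argument on its normal bundle, the same mechanism that forces the $1$-dimensional fixed arcs to stay apart, but now with the self-intersection number playing the decisive role.

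Let $S \subset X_0$ be the fixed $2$-sphere, $m = S \cdot S \neq 0$ its self-intersection, and $c \not\equiv 0 \pmod p$ the weight by which $\pi$ rotates the normal directions to $S$. By the equivariant bundle structure established above, the Taubes map propagates $S$ into a $3$-dimensional component $N$ of the fixed set $\cM_1^{\ast}(X,\theta)^{\pi}$, and on the collar $K \cong X \times (0,\lambda_0)$ the equivariance $t\cdot\widetilde{T}(x,\lambda)=\widetilde{T}(t\cdot x,\lambda)$ identifies $N$ with $S \times (0,\lambda_0)$. Since $\dim\cM_1(X,\theta)=8\ell-3=5$, the normal bundle of $N$ in $\cM_1^{\ast}$ has real rank $2$, and because $\pi$ fixes $N$ pointwise while rotating the normal fibres by the single weight $c$, it is canonically a complex line bundle $L \to N$. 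On the collar this identification carries $L$ to the pullback of the normal bundle $\nu(S,X_0)$, so for the inclusion $i\colon S \hookrightarrow N$ we have $i^{\ast}L \cong \nu(S,X_0)$ and
\begin{equation}
\langle i^{\ast}e(L), [S]\rangle = e(\nu(S,X_0)) = S\cdot S = m \neq 0.
\end{equation}

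Now suppose toward a contradiction that $N$ bounds off, meaning that its closure $\overline N$ in the Uhlenbeck--Taubes compactification reaches no stratum of the cylindrical-end ideal boundary, so that $\overline N$ is a compact $3$-manifold whose only boundary is the Taubes sphere $S$ with $\overline N \setminus S \subset \cM_1^{\ast}(X,\theta)$. By Cho's theorem and the equivariant perturbation scheme of \S4 we may take $N$ smooth and in general position, and as the fixed stratum of the single equivariant lift generated by $S$ it is disjoint from the $1$-dimensional strata of the other lifts (fixed-point components of distinct lifts being disjoint), so these contribute no $2$-dimensional boundary. Then $L$ extends over all of $\overline N$; since $\cM_1^{\ast}$ carries a Donaldson orientation and $L$ is complex, hence oriented, $\overline N$ is an oriented compact $3$-manifold with $\partial\overline N = S \cong S^2$. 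In the long exact sequence of the pair the composite $H_3(\overline N, S) \xrightarrow{\partial} H_2(S) \xrightarrow{i_{\ast}} H_2(\overline N)$ vanishes, whence $i_{\ast}[S] = i_{\ast}\partial[\overline N, S] = 0$ and therefore
\begin{equation}
m = \langle i^{\ast}e(L), [S]\rangle = \langle e(L), i_{\ast}[S]\rangle = 0,
\end{equation}
contradicting $m \neq 0$. Hence $N$ cannot bound off and must instead run out to the cylindrical-end ideal boundary of $\overline{\cM}_1(X,\theta)$.

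The step demanding the most care is the first one: identifying the normal bundle of $N$ near the collar with the equivariant pullback of $\nu(S,X_0)$, and checking that its rotation weight is constant along $N$, so that $L$ is a genuine complex line bundle whose boundary Euler number is exactly the self-intersection. Once this is secured the obstruction is purely formal. The same computation records the general mechanism: were $\overline N$ to cap off by joining a second fixed $2$-sphere $S'$, then $i_{\ast}([S]+[S'])=0$ would force $S\cdot S = -(S'\cdot S')$, so a fixed stratum can link only spheres of opposite self-intersection.
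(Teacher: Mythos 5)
Your argument has the same topological skeleton as the paper's proof---a degree-two cohomology class on the $3$-dimensional stratum that pairs nontrivially with $[S]$ (the pairing being $S\cdot S$), contradicting the fact that $[S]$ becomes null-homologous in a compact filling---but the class you use is not known to exist, and this is a genuine gap. Your $L$ is by definition the normal bundle of $N$ inside $\cM_1^{\ast}(X,\theta)$, so every step of its construction (real rank $2$, locally constant rotation weight, complex structure, extension over the trimmed closure) presupposes that the ambient moduli space is a smooth $5$-manifold along \emph{all} of $N$. Equivariant transversality cannot deliver this: Cho's theorem makes the components of the fixed set smooth, but, as the paper stresses in \S4, the fixed stratum ``may not have smooth $\pi$-invariant neighborhoods and in general the surrounding moduli space can be highly singular,'' because the obstruction $[H^1_A]-[H^2_A]$ need not be an actual representation, and Bierstone general position only endows the equivariant moduli space with a Whitney stratified structure in which the link of the stratum $N$ is a stratified cone, not a linear sphere bundle. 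Ambient smoothness is known only near the Taubes collar (where $H^2_A=0$ by Taubes), which is exactly where your identification $i^{\ast}L\cong\nu(S,X_0)$ takes place; deep in the moduli space, where $\overline{N}$ would have to close up, there is no normal bundle to speak of, so $e(L)$ is not defined on the filling and the vanishing $\langle e(L), i_{\ast}[S]\rangle=0$ cannot be asserted.

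The paper's proof is engineered precisely to sidestep this: it replaces $e(L)$ by the Donaldson class $c=\mu([S])\in H^{2}(\cB^{\ast};\bZ)$, which is defined on the whole irreducible configuration space and therefore restricts to the stratum $F$ by naturality, no matter how singular the moduli space is around it. The identity $i^{\ast}\mu([S])=PD([S])$ on the Taubes collar (\cite{DK90} 5.3) supplies the same nonzero pairing $S\cdot S$ that you obtain from the collar computation, and then $\langle i^{\ast}c,\partial[F]\rangle=0$ gives the contradiction exactly as in your final step. To salvage your route you would need either to establish ambient smoothness along $N$ (not available) or to rework the Euler-class argument in the stratified category; the $\mu$-map gets the needed global class for free, which is why the paper uses it.
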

\begin{proof}
A fixed $2$-sphere $S$ in $X$ represents a non-trivial homology class $[S]\in H_2(X,\bZ)$. Let $F$ denote the $3$-dimensional stratum in the moduli space which arises from the Taubes boundary and suppose $\partial F=S$. Let $[c]=\mu([S]) \in H^{2}(\cB^{*},\bZ)$ and $i:X \rightarrow \cM_1(X,\theta)$ denote the inclusion map, then $i^{*}\mu([S])$ is the Poincar\'e dual $PD([S]) \in H^{2}(X,\bZ)$ (\cite{DK90} 5.3)and so $\langle i^{*}c,\partial[F]\rangle$ evaluates non-trivially. On the other hand, we have $\langle i^{*}c,\partial[F]\rangle=\langle i^{*}\delta(c),[F]\rangle=0$ giving a contradiction. Thus $F$ cannot bound off in the moduli space.
\end{proof}
The previous lemmas indicate that the fixed set must have an end that is not part of the Taubes collar and according to the Uhlenbeck compactness result applicable here, must lead to energy splitting down the cylindrical end. We first rule out the case of a trivial splitting:
\begin{lem}
\label{lemma: lemmaA}
If the $\pi$-action has at least $3$ fixed points, then one-dimensional fixed set generated by the fixed points in the Taubes boundary $X\times (0,\lambda_0)$ cannot split energy in the equivariant compactification of $\cM_1(X,\theta)$ by $\cM_0(X,\theta) \times \cM_1(\theta,\theta)$.
\end{lem}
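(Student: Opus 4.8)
The plan is to show that the relevant boundary stratum has \emph{empty} fixed set, so that $\cM_1(X,\theta)^\pi$ can have no end modeled on it. Since the $\pi$-action on the Uhlenbeck-Taubes compactification is the product action, the fixed set of the split stratum factors as
\begin{equation*}
\bigl[\cM_0(X,\theta) \times \cM_1(\Sigma \times \bR, \theta,\theta)\bigr]^\pi = \cM_0(X,\theta)^\pi \times \cM_1(\Sigma \times \bR,\theta,\theta)^\pi .
\end{equation*}
First I would pin down the first factor: since $X$ is simply-connected, every finite-energy flat connection asymptotic to $\theta$ is trivial, so $\cM_0(X,\theta)=\{\theta\}$ is a single (reducible) point, which is automatically $\pi$-fixed. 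It therefore suffices to prove that the cylindrical factor $\cM_1(\Sigma \times \bR,\theta,\theta)^\pi$ is empty.

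The key step is to analyze the induced $\pi$-action on the energy-one cylinder moduli space. Because both limiting flat connections are trivial and no intermediate non-trivial flat connection is available, $\cM_1(\Sigma \times \bR,\theta,\theta)$ is governed by the Taubes collar on the cylinder: each such instanton has a well-defined concentration center in $\Sigma \times \bR$ together with a scale, giving a $\pi$-equivariant identification
\begin{equation*}
\cM_1(\Sigma \times \bR,\theta,\theta) \iso (\Sigma \times \bR) \times (0,\infty),
\end{equation*}
consistent with the formal dimension $8-h_\theta = 8-3 = 5 = 4+1$. The induced $\pi$-action is free on the center factor $\Sigma \times \bR$ and trivial on the scale, because $\pi$ acts freely on $\Sigma$ and trivially on the $\bR$-direction of the end. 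Equivalently, the concentration point is gauge-invariant and the center of $t^{\ast}A$ is $t^{-1}$ of the center of $A$, so any $\pi$-fixed class would have a $\pi$-fixed center; as $\pi$ acts freely on $\Sigma \times \bR$ there is none. Hence $\cM_1(\Sigma \times \bR,\theta,\theta)^\pi = \emptyset$.

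Combining the two steps, the fixed set of the stratum $\cM_0(X,\theta) \times \cM_1(\Sigma \times \bR,\theta,\theta)$ is empty, so no end of the one-dimensional fixed set originating in the Taubes boundary $X \times (0,\lambda_0)$ can limit to this trivial splitting, which is exactly the assertion. The hypothesis of at least three fixed points enters only through the preceding lemmas, which guarantee that the arcs $\gamma_i$ are genuine one-dimensional strata that neither connect to one another nor bound off within the collar, so that their remaining ends must be sought among the energy-splitting strata; the present lemma removes the trivial splitting from that list.

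The main obstacle I anticipate is making the equivariant center-and-scale identification of $\cM_1(\Sigma \times \bR,\theta,\theta)$ rigorous at the reducible limit $\theta$: the trivial connection is a degenerate critical point of the Chern-Simons functional, with $h^0_\theta = 3$ coming from its full $SU(2)$ stabilizer, so one must verify both that the center of mass is well-defined despite the non-compact $\bR$-factor and that the induced $\pi$-action genuinely restricts to the geometric free action on the center. Once freeness on the center is established, emptiness of the fixed set, and hence the conclusion, is immediate.
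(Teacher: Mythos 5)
Your reduction is sound as far as it goes: the action on the ideal stratum is the product action, so it would indeed suffice to show $\cM_0(X,\theta)^\pi\times\cM_1(\Sigma\times\bR,\theta,\theta)^\pi=\emptyset$, and you are right that $\cM_0(X,\theta)=\{[\theta]\}$ because $X$ is simply connected. The gap is your claimed emptiness of $\cM_1(\Sigma\times\bR,\theta,\theta)^\pi$, and it is substantive, not technical. The identification $\cM_1(\Sigma\times\bR,\theta,\theta)\iso(\Sigma\times\bR)\times(0,\infty)$ is not available: the Taubes construction produces only a collar of highly concentrated instantons at small scale and says nothing about the rest of the five-dimensional moduli space (the conformal argument giving the global center-and-scale picture for the round cylinder $S^3\times\bR$, viewed as $S^4$ minus two points, has no analogue for $\Sigma(2,3,5)$). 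Consequently a $\pi$-fixed class --- a connection $A$ with $t^{\ast}A$ gauge equivalent to $A$, i.e.\ an equivariant lift --- need not have any ``center'': its curvature density is merely a $\pi$-invariant function on $\Sigma\times\bR$, and a free action admits plenty of invariant densities. Your argument does show that the fixed set misses the concentrated collar of the cylinder moduli space, but nothing more.

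Worse, emptiness of this fixed set is exactly the kind of statement that can only be attacked by arithmetic, not by freeness: a $\pi$-fixed charge-one connection on $\Sigma\times\bR$ descends to a fractional-charge connection on $Q\times\bR$, $Q=\Sigma(2,3,5)/\pi$, asymptotic at both ends to reducible flat connections $\beta(k_{\pm})$ on $Q$ (precisely those pulling back to $\theta$). The obstruction the paper uses for the other strata --- the Chern--Simons congruence computed from Auckly's formula --- here reads $k_+^2-k_-^2\equiv 30 \pmod p$, which is solvable for the relevant primes (e.g.\ $p=11$ with $k_+=3$, $k_-=1$); so freeness alone cannot rule out this stratum, and your key claim is unsupported and likely false. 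The paper instead extracts the contradiction from the factor you treat as harmless: in a trivial splitting the $X$-side limit has zero energy, hence is a \emph{flat} equivariant bundle over the simply connected $X$, and flatness forces the isotropy representations over the fibers of all fixed points of $X$ to coincide; but the lift generated at the Taubes boundary by a fixed point with rotation numbers $(a,b)$ has isotropy $\pm(b-a)$ over that point and $\pm(a+b)$ over the others, so coincidence gives $a+b\equiv\pm(b-a)$, i.e.\ $2a\equiv 0$ or $2b\equiv 0 \pmod p$, impossible for $p$ odd with $a,b\not\equiv 0$. Note that this is where the hypothesis of several fixed points enters the proof directly --- one needs the ``other'' fixed points to compare against --- not merely ``through the preceding lemmas'' as you suggest.
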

\begin{proof}
The idea is that $\cM_0(X,\theta)$ has zero energy, so it leaves behind a flat equivariant bundle which identifies the isotropy over the fibers of each fixed point. Suppose $\gamma$ is a one-parameter family of $\pi$-fixed ASD connections generated at the Taubes boundary from the fixed point with rotation numbers $(a,b)$. Then the corresponding equivariant lift has isotropy over the fiber of this fixed point with weight $\pm(b-a)$ and $\pm(a+b)$ over the other fixed points. In such a energy splitting a flat equivariant bundle identifies the isotropy over all the points, so $a+b=\pm (b-a)$ and this forces either $2a\equiv 0 \pmod p$ or $2b\equiv 0 \pmod p$. Since $p$ is odd and $(a,b)$ are rotation numbers for a fixed-point we get a contradiction.
\end{proof}
According to the above lemmas, the equivariant compactification of the fixed-set $\cM_1(X,\theta)^{\pi}$ that arises from the Taubes boundary $X^{\pi}\times (0,\lambda_0)$ must limit to a connection that takes energy down the cylindrical-end $\Sigma(2,3,5) \times [0,\infty)$ and splits non-trivial energy. This will involve the flat connections of $\Sigma(2,3,5)$, 
which as representations $\alpha$ of the fundamental group into $SU(2)$ are determined by rotation numbers $(\ell_1,\ell_2,\ell_3)$ and for $\Sigma(2,3,5)$ there are only two irreducible representations \cite{Nikolai}. We record here a table that gives the necessary values for index calculations. 
\begin{table}[ht]
\begin{center}
\small
\begin{tabular}{|l|l|l|l|l|}\hline
$\alpha$ & $(\ell_1,\ell_2,\ell_3)$ & $\mu(\alpha)$ & $\rho(\alpha)/2$ & $-CS(\alpha) \in (0,1]$\\ \hline
1 & (1,2,2) & 5 & -97/30 & 49/120\\ \hline
2 & (1,2,4) & 1 & -73/30 & 1/120\\  \hline
\end{tabular}
\end{center}
\caption{For each flat connection $\alpha$ of $\Sigma(2,3,5)$ are listed values for the Floer $\mu$-index modulo $8$, one-half the Atiyah-Patodi-Singer $\rho$-invariant and minus the Chern-Simons invariant of the given flat connection (\cite{FS90}). The values for the $\rho$-invariant can be computed using a flat $SO(3)$-cobordism to a disjoint union of lens spaces (see \cite[p. 144]{Nikolai}}.
\label{table: TableA1}
\end{table}

The energy in $\cM(\alpha_i,\theta)$ is given by $-CS(\Sigma(2,3,5),\alpha_i) \mod \bZ \in (0,1]$ (\cite{FS90}, \cite[p. 101]{Nikolai}). In an energy splitting, the moduli space has an end given by a local diffeomorphism
\begin{equation*}
\cM_{\ell_0}(X,\alpha_0) \times_{\alpha_0} \cM_{\ell_1}(\alpha_0,\alpha_1) \times_{\alpha_1} \cdots \times_{\alpha_{k-1}} \cM_{\ell_k}(\alpha_{k-1},\theta) \rightarrow \cM_1(X,\theta)
\end{equation*}
where $\{\alpha_i\}_{i=1}^{k-1}$ are irreducible flat connections on $\Sigma(2,3,5)$ -- this then leads to a dimension count
\begin{equation*}
5=\dim \cM_{\ell_0}(X,\alpha_0)+\sum_{i=1}^{k} \dim \cM_{\ell_{i}}(\alpha_{i-1},\alpha_{i})
\end{equation*}
with $\alpha_{k}=\theta$ and as the convergence is without loss of energy we get the condition $\sum_{i=0}^{k} \ell_{i}=1$. The dimensions can be determined modulo $8$ by the formulas
\begin{equation}
\dim \cM(\alpha,\beta) \equiv \mu(\alpha)-\mu(\beta)- \dim \text{Stab}(\beta) \pmod 8
\end{equation}
and $\dim \cM(X,\alpha)\equiv -\mu(\alpha)-3 \pmod 8$ \cite{Floer88} where $\mu$ is the Floer index and $\mu(\theta)=-3$. Imposing the energy condition allows one to determine the exact geometric dimensions and since there are only $2$ irreducible flat connections on $\Sigma(2,3,5)$ denoted by $\alpha_1=(1,2,2)$ and $\alpha_2=(1,2,4)$ we have only the possibilities in Table \ref{TableA2}.

\begin{table}[ht]
\begin{center}
\small
\begin{tabular}{|l|l|l|l|}\hline
 & \small{Charge-Splitting} & \small{Dimension} & \small{Energy}\\\hline
A & $\cM(X,\alpha_1)\times \cM(\alpha_1,\theta)$ & \small{0+5} & \small{71/120+49/120=1}\\\hline
B & $\cM(X,\alpha_1)\times \cM(\alpha_1,\alpha_2)  \times \cM(\alpha_2,\theta)$ & \small{0+4+1} & \small{71/120+2/5+1/120=1}\\\hline
C & $\cM(X,\alpha_2)\times \cM(\alpha_2,\theta)$ & \small{4+1} & \small{119/120+1/120=1}\\\hline
D & $\cM(X,\theta)\times \cM(\theta,\theta)$ & \small{0+5} & \small{0+1}\\\hline
 \end{tabular}
 \end{center}
\caption{All possible energy splitting in the compactification of $\cM_1(X,\theta)$.}
\label{TableA2}
\end{table}

\begin{figure}[ht]   
\centering
\includegraphics[scale=.45]{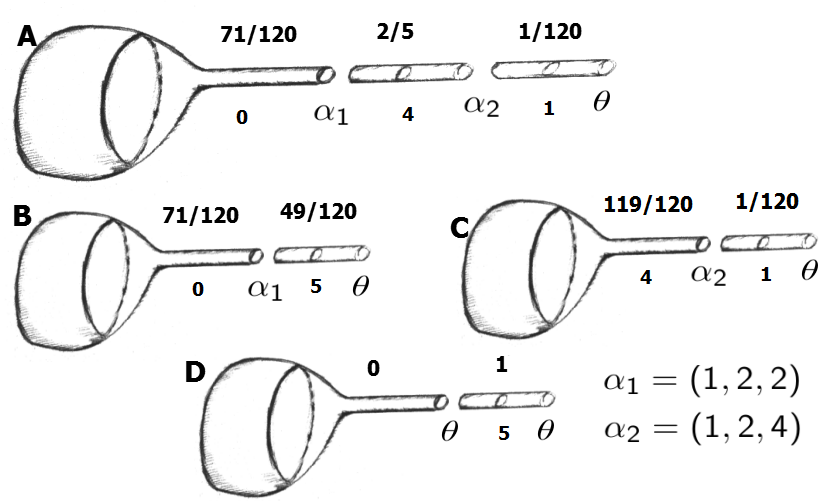}            
\caption[Charge splitting for the Poinar\'e homology $3$-sphere.]{The figure shows the charge splitting that can occur for $\Sigma(2,3,5)$. The number above is the Yang-Mills energy and is given by $CS(\beta)-CS(\alpha) \pmod \bZ$ put in the range $(0,1]$. The value below is the formal dimension of ASD connections with the corresponding fixed energy. As energy is conserved, the total sum of the energy values is one and the sum of the formal dimensions is 5.}
\end{figure} 

Let $\Sigma(b=0,(a_1,b_1),(a_2,b_2),(a_3,b_3))$ be the Seifert invariants $\Sigma(a_1,a_2,a_3)$ and $\pi=\bZ/p$ act as the standard action on $\Sigma(a_1,a_2,a_3)$. Then the quotient $Q=\Sigma/\pi$ is a rational homology sphere(or a $\bZ$-homology lens space) with Seifert invariants $Q(b=0;(a_i,pb_i))$; we will need a formula for the Chern-Simons invariant of reducible flat connections on $Q$. Note that if we take the $p$-fold cover we get the trivial product connection on $\Sigma(a_1,a_2,a_3)$ and as Chern-Simons is multiplicative under finite covers we expect an expression of the form $$CS(Q,\alpha(k))\equiv \frac{n}{p} \mod \bZ$$ for some integer $n$. That this is indeed the case can be verified using Auckly's formula in \cite{Auckly94b}:
\begin{thm}
$CS(Q,\alpha(k))\equiv \dfrac{n_0k}{p}\pmod \bZ$ where $n_0$ satisfies $n_0a_1a_2a_3 \equiv k \pmod p$.
\end{thm}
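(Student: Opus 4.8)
The plan is to combine the multiplicativity of the Chern--Simons invariant under finite covers, which forces the $1/p$ denominator, with a direct application of Auckly's formula, which identifies the numerator. First I would describe the reducible flat connections on $Q$. Since $\Sigma(a_1,a_2,a_3)$ is an integral homology sphere its fundamental group is perfect (for $\Sigma(2,3,5)$ it is the binary icosahedral group), so the abelianization $\pi_1(Q)\to H_1(Q)=\bZ/p$ coincides with the projection defining the covering $c\colon \Sigma\to Q=\Sigma/\pi$. Hence the reducible $SU(2)$ connections are exactly the $U(1)$ representations $\alpha(k)$ obtained by composing this projection with the character $1\mapsto e^{2\pi i k/p}$, indexed by $k\in \bZ/p$, and each pulls back to the trivial product connection on $\Sigma$ because $\pi_1(\Sigma)$ is precisely the kernel of $\pi_1(Q)\to \bZ/p$.

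The denominator then comes for free. Chern--Simons is multiplicative under the $p$-fold cover, so $p\cdot CS(Q,\alpha(k))\equiv CS(\Sigma,c^{*}\alpha(k))\equiv 0\pmod{\bZ}$ since $c^{*}\alpha(k)$ is trivial. This gives $CS(Q,\alpha(k))\equiv n/p\pmod{\bZ}$ for some integer $n=n(k)$, and reduces the theorem to computing $n$ modulo $p$.

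The core step is to evaluate $n$ using Auckly's formula \cite{Auckly94b}. Here I would feed in the Seifert data $(b=0;(a_i,pb_i))$ of $Q$ together with the rotation numbers of $\alpha(k)$ at each exceptional fiber, which are read off from the Seifert presentation relations $x_i^{a_i}h^{pb_i}=1$ with $h$ the central regular-fiber class. Auckly's formula returns $CS(Q,\alpha(k))$ as a sum of fiber contributions, each quadratic in the representation data and weighted by an inverse of $a_i$ modulo the appropriate integer. The remaining work is number-theoretic: using $b=0$ and the defining congruence $n_0 a_1a_2a_3\equiv k \pmod{p}$, I would collapse this sum modulo $\bZ$ and match it to $n_0 k/p$, equivalently $k^{2}(a_1a_2a_3)^{-1}/p$ with the inverse taken modulo $p$.

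The main obstacle I anticipate is the bookkeeping of conventions: aligning the orientation and sign conventions for the Seifert invariants $(a_i,pb_i)$ used here with those in Auckly's formula, and correctly extracting the rotation numbers of $\alpha(k)$ at the exceptional fibers from the presentation of $\pi_1(Q)$. Once the data are matched, the final reduction to $n_0 k/p$ is a routine, if slightly intricate, manipulation modulo $\bZ$, and its internal consistency is guaranteed by the $1/p$ denominator already forced by the covering argument.
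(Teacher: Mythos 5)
Your proposal matches the paper's proof in essence: the paper likewise identifies the reducible flat connection as a $U(1)$ representation of $\pi_1(Q)$ (sending $h \mapsto e^{2\pi i k/p}$, $x_i \mapsto 1$, encoded in Auckly's parametrization by $n_i = b_i$ for $i\neq 0$ and $n_0 a_1a_2a_3 \equiv k \pmod p$), plugs this data into Auckly's formula with $c=0$, and simplifies using the Seifert relation $\sum_i b_i/a_i = p/(a_1a_2a_3)$ to obtain $n_0k/p$. Your preliminary covering/multiplicativity argument forcing the $1/p$ denominator is exactly the motivating remark the paper makes immediately before the theorem, so the two arguments coincide in both structure and substance.
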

\begin{proof}
This is obtained by using a representation $\rho(n_0,n_1,n_2,n_3): \pi_1(Q) \rightarrow U(1)$ where $n_0$ satisfies $a_1a_2a_3\cdot n_0 \equiv k \pmod p$ and $n_i=b_i$ for $i\neq 0$. This representation sends $h \rightarrow e^{2\pi i k/p}$ and $x_i \mapsto 1$. The Seifert invariants satisfy 
\begin{equation}
\sum_i \dfrac{b_i}{a_i}=\dfrac{p}{a_1a_2a_3}
\end{equation}
and the formula for the Chern-Simons invariant of the corresponding flat connection is given in \cite[p.234]{Auckly94b} as 
\begin{align*}
CS(Q,\rho)\equiv- & \sum_{j=1}^{3} \dfrac{\rho_jn_j^2+n_j(n_0+c/2+\sum_{i=1}^{3}n_i/a_i)/(b+\sum_i b_i/a_i)}{a_j}\\
          &-\dfrac{(n_0+c/2)(n_0+c/2+\sum_i n_i/a_i)}{b+\sum_i b_i/a_i} \pmod \bZ
\end{align*}
with $c=0$ and $\rho_j$ satisfies $a_j\sigma_j-b_j\rho_j=1$ for some integers $\sigma_j$. This then simplifies to $\dfrac{n_0k}{p} \mod \bZ$.
\end{proof}
The Chern-Simons invariants for the irreducible flat connections on $Q$ can be computed by using an $SO(3)$ flat cobordism to a disjoint union of lens spaces as with $\Sigma(a_1,a_2,a_3)$ but also again from \cite[p.232]{Auckly94b}. 
\begin{thm}[\cite{Auckly94b}]
For an irreducible flat $SU(2)$ connection $\alpha$ on $Q$ 
\begin{equation} 
  CS(Q,\alpha)\equiv - \sum_{i=1}^{3}(\frac{\rho_i {\ell_i}^2}{a_i} +\frac{\ell_i}{a_i}) + \frac{1}{4}(b + \sum_{i=1}^3 \frac{pb_i}{a_i}) \pmod \bZ
\end{equation}
where $\rho_i$ satisfies $a_i \sigma_i - (pb_i)\rho_i=1$ for some integers $\sigma_i$, $i=1..3$.
\end{thm}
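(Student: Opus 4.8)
The statement is Auckly's Chern--Simons formula for the irreducible flat $SU(2)$ connections on the quotient Seifert space $Q$, and the plan is to specialize his general formula exactly as was done for the abelian connections in the previous theorem. First I would fix the Seifert presentation
\[
\pi_1(Q)=\langle\, x_1,x_2,x_3,h \mid h \text{ central},\ x_i^{a_i}=h^{-pb_i},\ x_1x_2x_3=h^{-b}\,\rangle,
\]
in which the singular-fibre data is $(a_i,pb_i)$ and $c=0$, and then describe an irreducible representation $\alpha$ by its rotation numbers $(\ell_1,\ell_2,\ell_3)$: the central element $h$ is sent to $\pm\,\mathrm{Id}$ and each $x_i$ is conjugate to a rotation through angle $\pi\ell_i/a_i$, so that the holonomy around the $i$-th singular fibre lies in a fixed conjugacy class determined by $\ell_i$. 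The group relations then record the admissibility conditions on the $\ell_i$, and this pins down the component of the flat moduli space on which the formula is to be evaluated.

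Second, I would invoke the cobordism computation underlying Auckly's formula \cite[p.\ 232]{Auckly94b}, which is the same mechanism used on p.\ 234 for the abelian case. The idea is to cap off a neighbourhood of each singular fibre by an orbifold disc-bundle piece and to extend $\alpha$ as a flat connection across the cap (possible precisely because its holonomy around the fibre sits in the correct conjugacy class). Since the relative Chern--Simons number across a flat cobordism is the integral of the Pontryagin density, which vanishes, Chern--Simons is additive modulo $\bZ$ across the cobordism. One thereby expresses $CS(Q,\alpha)$ as a sum of lens-space Chern--Simons invariants $CS(L(a_i,\cdot))$ --- each of the familiar quadratic shape $\rho_i\ell_i^{2}/a_i$ --- together with a global term coming from the Seifert Euler number, which supplies the $\tfrac14\bigl(b+\sum_i pb_i/a_i\bigr)$ summand.

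Finally I would substitute the specific data of $Q$. Here $\rho_i$ is chosen, as in the statement, so that $a_i\sigma_i-(pb_i)\rho_i=1$, i.e.\ $\rho_i$ is (minus) the inverse of $pb_i$ modulo $a_i$. Tracking the linear correction $\ell_i/a_i$, the signs of the $\rho_i$, and the orientation of the cobordism, the lens-space and Euler-number contributions then assemble into
\[
CS(Q,\alpha)\equiv -\sum_{i=1}^{3}\Big(\frac{\rho_i \ell_i^{2}}{a_i}+\frac{\ell_i}{a_i}\Big)+\frac14\Big(b+\sum_{i=1}^{3}\frac{pb_i}{a_i}\Big)\pmod{\bZ}.
\]
The main obstacle is purely bookkeeping: verifying that $\alpha$ extends flatly over each singular-fibre cap (the conjugacy-class condition) and then matching normalization conventions --- the factors of $2$ in the $SU(2)$ versus $SO(3)$ pictures, the sign of each $\rho_i$, the linear shift $+\ell_i/a_i$, and the orientation --- so that Auckly's general expression collapses to exactly the stated formula modulo $\bZ$. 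Since the cobordism machinery is already available from \cite{Auckly94b}, the substance lies in the specialization and the sign conventions rather than in re-deriving the invariant.
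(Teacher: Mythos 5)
Your proposal is correct and takes essentially the same approach as the paper: the paper offers no proof of this statement at all, presenting it as a direct citation of Auckly's formula \cite[p.232]{Auckly94b} and remarking only that it can equivalently be obtained by an $SO(3)$ flat cobordism to a disjoint union of lens spaces --- which is precisely the specialization-plus-cobordism argument you outline. Your sketch (extending the flat connection over the singular-fibre caps, using additivity of Chern--Simons across the flat cobordism, and assembling the lens-space and Euler-number contributions for the Seifert data $(a_i,pb_i)$ with $c=0$) is, if anything, more detailed than what appears in the text.
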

We now investigate whether any of the charge splittings given in Table \ref{TableA2} contain $\pi$-fixed ASD connections:
\begin{equation}
\cM_{\ell_0}^{\pi}(X,\alpha_0) \times_{\alpha_0} \cM_{\ell_1}^{\pi}(\alpha_0,\alpha_1) \times_{\alpha_1} \cdots \times_{\alpha_{k-1}} \cM_{\ell_k}^{\pi}(\alpha_{k-1},\theta) \rightarrow \cM_1^{\pi}(X,\theta)
\end{equation}
It follows from immediately from Lemma \ref{lemma: lemmaA} that case D is ruled out. We now rule out the possibility of a $1$-dimensional fixed set in the equivariant moduli space $(\cM_1(X,\theta),\pi)$ splitting in case B of Table \ref{TableA2}.
\begin{lem}
The moduli space $\cM_{\ell}(\alpha_1,\alpha_2)$ does not support $\pi$-fixed ASD connections with energy $\ell=2/5$ for any odd prime $p \geq 7$.
\end{lem}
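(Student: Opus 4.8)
The plan is to pass to the quotient $Q=\Sigma(2,3,5)/\pi$ and convert the hypothetical fixed connection into an energy constraint that Auckly's Chern--Simons formula forbids. Suppose, for contradiction, that some $[\tilde A]\in\cM_{\ell}(\alpha_1,\alpha_2)$ with $\ell=2/5$ is $\pi$-fixed, i.e. gauge-equivalent to each of its pullbacks $s^{*}\tilde A$, $s\in\pi$. As in the discussion of equivariant lifts above, being $\pi$-fixed produces a $\widetilde\pi=\bZ/2p$ lift of the free $\pi$-action on $\Sigma(2,3,5)\times\bR$ leaving $\tilde A$ invariant, so $\tilde A$ descends to an ASD connection $A$ on the quotient cylinder $Q\times\bR$, where $Q$ is the $\bZ$-homology lens space with Seifert invariants $Q(b=0;(a_i,pb_i))$. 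The descended connection limits to flat connections $\bar\alpha_1,\bar\alpha_2$ on $Q$ that are covered by $\alpha_1=(1,2,2)$ and $\alpha_2=(1,2,4)$, with the rotation numbers $(\ell_1,\ell_2,\ell_3)$ at the singular fibers unchanged by the covering.

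First I would record the energy downstairs. Since $\Sigma(2,3,5)\times\bR\to Q\times\bR$ is a $p$-fold cover and the relative second Chern number is multiplicative, the energy of $A$ is $c_2(A)=c_2(\tilde A)/p=2/(5p)$. On the other hand, any finite-energy ASD connection on $Q\times\bR$ running from $\bar\alpha_1$ to $\bar\alpha_2$ has energy $\equiv CS(Q,\bar\alpha_2)-CS(Q,\bar\alpha_1)\pmod\bZ$. Hence a $\pi$-fixed connection of energy $2/5$ can exist only if
\[
\frac{2}{5p}\equiv CS(Q,\bar\alpha_2)-CS(Q,\bar\alpha_1)\pmod\bZ .
\]

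Next I would evaluate the right-hand side with the irreducible Chern--Simons formula of Auckly. Both connections share the same Seifert data, so the integers $\rho_i$ (solving $a_i\sigma_i-(pb_i)\rho_i=1$) and the curvature term $\tfrac14\sum pb_i/a_i$ are common to $\bar\alpha_1$ and $\bar\alpha_2$ and cancel in the difference. Since $\alpha_1$ and $\alpha_2$ agree in their first two rotation numbers and differ only in the third ($\ell_3=2$ versus $\ell_3=4$), the difference collapses to the single $a_3=5$ term,
\[
CS(Q,\bar\alpha_2)-CS(Q,\bar\alpha_1)\equiv-\frac{\rho_3(4^2-2^2)+(4-2)}{5}=-\frac{12\rho_3+2}{5}\pmod\bZ ,
\]
a fraction whose denominator divides $5$, with $\rho_3\in\bZ$ (here $5\nmid pb_3$, so $\rho_3$ exists as an integer).

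Finally I would compare denominators. Writing the right-hand side as $k/5$ with $k\in\bZ$, the required congruence $2/(5p)\equiv k/5\pmod\bZ$ clears to $2\equiv kp\pmod{5p}$, forcing $p\mid 2$. This is impossible for any odd prime $p\geq 7$ — indeed for any $p$ coprime to $30$, the range in which the $\pi$-action on $\Sigma(2,3,5)$ is free — so no such $\pi$-fixed connection exists. The step I expect to be the main obstacle is the descent in the first paragraph: one must verify that the $\widetilde\pi=\bZ/2p$ lift produced by $\pi$-invariance yields an honest connection on $Q\times\bR$ whose limiting flat connections are genuinely covered by $\alpha_1,\alpha_2$ and whose singular-fiber rotation numbers are preserved, so that Auckly's formula applies verbatim; once that is in place, the energy and Chern--Simons bookkeeping is routine.
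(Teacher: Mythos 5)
Your proposal follows essentially the same route as the paper: descend the hypothetical $\pi$-fixed connection to the quotient cylinder $Q\times\bR$, equate the rescaled energy with a difference of Chern--Simons invariants on $Q$ computed via Auckly's formula, and derive a contradiction by comparing denominators, since $p\geq 7$ cannot divide the denominator of that Chern--Simons difference. The one substantive remark is that the step you flag as the main obstacle---showing the rotation numbers $(\ell_1,\ell_2,\ell_3)$ descend unchanged so that the difference collapses to a fraction with denominator $5$---is not actually needed: the paper uses only that both limits are irreducible flat connections on the same $Q$, so the $\tfrac{1}{4}\bigl(b+\sum_i pb_i/a_i\bigr)$ terms in Auckly's formula cancel and the difference has the form $n/30$, whose denominator is still prime to $p$ and yields the same contradiction (the paper also works in $SO(3)$ normalization, with charge $4\ell/p$ and congruence mod $4\bZ$ rather than your $c_2$-normalization mod $\bZ$, but this factor of $4$ is immaterial to the denominator count).
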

\begin{proof}
If there exists a $\pi$-fixed ASD connection with energy $\ell=2/5$ in $\cM_{\ell}^{\pi}(\alpha_1,\alpha_2)$ then it corresponds to an equivariant lift of the $\pi$-action to the principal bundle which leaves that connection invariant. Since a $\pi$-invariant connection descends to an $SO(3)$ connection on the cylinder $Q \times \bR$ where $Q=\Sigma(2,3,5)/\pi$ is a rational homology $3$-sphere, the moduli space in the quotient must be non-empty. Let $\alpha_1^{\prime}$ and $\alpha_2^{\prime}$ denote the irreducible limiting flat connections on $Q \times \bR$. The connection in the quotient has energy or Pontryagin charge $4\ell/p=8/5p$, however, a non-empty moduli space must have energy that is congruent modulo $4\bZ$ (cf. \cite[Remark 5.6, p. 102]{Nikolai}) to the difference of the $SO(3)$ Chern-Simons invariants $CS(Q,\alpha_2^{\prime})-CS(Q,\alpha_1^{\prime})$. It follows from Auckly's formula (Theorem 2.4.7) that this difference has the form $n/30$ for some integer $n$. But now $\dfrac{n}{30}$ is not congruent to $\dfrac{8}{5p} \mod 4\bZ$ since if two rational numbers are congruent modulo integers then they must have the same denominator, but the former has denominator at most $30$ and for the latter $p\geq7$.  It must be then that $\cM_{\ell}^{\pi}(\alpha_1,\alpha_2)$ is empty.
\end{proof}
It remains then to investigate the remaining cases $\cM_{\ell}(\alpha_i,\theta)$; a similar argument as above provides the following more general proposition which will give us a necessary condition for the existence of $\pi$-invariant ASD connections:
\begin{prop}
Suppose a principal $SU(2)$ bundle over $\Sigma(a_1,a_2,a_3) \times \bR$ admits $\pi$-invariant ASD connections with energy $\ell \equiv \dfrac{e^2}{4a_1a_2a_3} \in (0,1]$ asymptotic to an irreducible flat connection $\alpha$ at $-\infty$ and the trivial product at $+\infty$. Then this connection descends to an $SO(3)$ ASD connection on the quotient $Q \times \bR$ with energy $4\ell/p$ which limits to an irreducible connection still denoted by $\alpha$ at $-\infty$ and a flat $U(1)$-reducible connection $\beta$ at $+\infty$ which has $SO(3)$-holonomy number $\pm e \pmod p$. 
\end{prop}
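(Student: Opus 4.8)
The plan is to construct the quotient connection explicitly and then pin down its reducible limit by a Chern--Simons computation. First I would pass from $SU(2)$ to $SO(3)$: the hypothesized $\pi$-invariant connection is invariant under an equivariant lift $\widetilde{\pi}=\bZ/2p \to \Aut(P)$, and the central subgroup $\{\pm 1\}\subset SU(2)$ (the kernel of $\widetilde{\pi}\to\pi$) acts trivially on the adjoint bundle $\ad P = P\times_{SU(2)}\mathfrak{su}(2)$. Hence the residual $\pi=\bZ/p$ action on $\ad P$ covers the \emph{free} action on $\Sigma(a_1,a_2,a_3)\times\bR$, and dividing out produces a genuine $SO(3)$-bundle over $Q\times\bR$. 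Since the metric and orientation are $\pi$-invariant and $\Sigma\times\bR\to Q\times\bR$ is a local isometry, anti-self-duality descends, so the quotient carries an $SO(3)$ ASD connection.

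For the energy I would combine two normalizations: the covering is $p$-to-one, and passing to the $3$-dimensional adjoint representation rescales the Pontryagin charge by a factor of $4$ (so that $p_1(\ad P)=-4c_2(P)$). Integrating the $\pi$-invariant curvature density over a fundamental domain therefore yields energy $4\ell/p$ on $Q\times\bR$, as claimed. Next I would read off the limits. At $-\infty$ the irreducible $\alpha$ descends to a flat $SO(3)$ connection that is again irreducible, since a reducible quotient connection would pull back to a reducible one on $\Sigma$, contradicting irreducibility of $\alpha$. At $+\infty$ the trivial product $\theta$ carries the equivariant structure of the lift, which acts on the fibre of the (trivial) bundle by a diagonal representation $\widetilde{t}\mapsto \mathrm{diag}(\widetilde{t}^{\,d},\widetilde{t}^{-d})$; descending, this gives a flat connection $\beta$ on $Q$ that reduces to $U(1)\subset SO(3)$, whose $SO(3)$-holonomy around the generator of $\pi$ is $d\pmod p$. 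Consistency is automatic: such a $\beta$ pulls back to $\theta$ on the $p$-fold cover because its $U(1)$-holonomy is a $p$-th root of unity, killed after $p$ iterations.

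The crux is to identify $d$ with $\pm e$, and here the two occurrences of $e$ have a priori different origins: in the hypothesis $e$ is defined through the energy via $\ell\equiv e^2/(4a_1a_2a_3)$ (equivalently through $-CS(\Sigma,\alpha)$), whereas $d$ is the weight of the equivariant lift at the reducible end. To reconcile them I would invoke the energy/Chern--Simons congruence for $SO(3)$ connections on the quotient cylinder,
\[
\frac{4\ell}{p}\equiv CS(Q,\beta)-CS(Q,\alpha)\pmod{4\bZ},
\]
evaluating $CS(Q,\beta)$ by the reducible Chern--Simons formula of Auckly recalled above, which expresses $CS(Q,\beta)$ as a quadratic expression in the holonomy number $d$ with denominator dividing $p\,a_1a_2a_3$. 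Matching this against $\ell\equiv e^2/(4a_1a_2a_3)$ (together with the corresponding expression for the irreducible $CS(Q,\alpha)$) forces $d^2\equiv e^2$ in the relevant modulus, so that $d\equiv \pm e\pmod p$.

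I expect this final identification to be the main obstacle, precisely because it requires tracking several compatible but easily confused normalizations at once: the factor $4$ relating $SU(2)$ charge to $SO(3)$ Pontryagin charge, the factor $2$ relating $SU(2)$-weights to $SO(3)$-holonomy numbers, the passage between $\pmod{\bZ}$ and $\pmod{4\bZ}$ congruences, and the multiplicativity of Chern--Simons under the $p$-fold cover. The sign ambiguity $\pm e$ is intrinsic and reflects the residual Weyl/conjugation freedom in choosing the circle $U(1)\subset SU(2)$ that carries the reduction.
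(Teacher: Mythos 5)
Your proposal is correct and follows essentially the same route as the paper: descend the $\pi$-invariant connection to an $SO(3)$ ASD connection on $Q\times\bR$ with energy $4\ell/p$, invoke the congruence $CS(Q,\beta)-CS(Q,\alpha)\equiv 4\ell/p \pmod{4\bZ}$, and match against Auckly's formulas for the reducible and irreducible Chern--Simons invariants to force $k^2\equiv e^2 \pmod p$, hence holonomy $\pm e \pmod p$. The paper's proof is terser, taking the descent, the charge normalization, and the identification of the two limits for granted and going straight to the Chern--Simons computation, but the substance of the argument is identical to yours.
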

\begin{proof}
Since an invariant connection descends to a $SO(3)$ ASD connection, the moduli space in the quotient is non-empty, this again gives the relation between the $SO(3)$ Chern-Simons invariants
\begin{equation}
CS(Q,\beta)-CS(Q,\alpha) \equiv \dfrac{4\ell}{p}\equiv \dfrac{e^2}{pa_1a_2a_3} \mod 4\bZ
\end{equation}
But the Chern-Simons invariant of the reducible connection is given by $CS(Q,\beta(k))\equiv \dfrac{n_0k}{p}$ for some integer $n_0$ such that $n_0(a_1a_2a_3) \equiv k \pmod p$ and where $k$ is the $SO(3)$ holonomy number of the representation $\beta(k)$. On the other hand we have $CS(Q,\alpha) \equiv \dfrac{m}{a_1a_2a_3}$ for some integer $m$. Taking the difference gives 
\begin{align}
&\dfrac{n_0k(a_1a_2a_3)-mp}{p(a_1a_2a_3)} \equiv \dfrac{e^2}{p(a_1a_2a_3)} \mod 4\bZ
\end{align}
This implies that the numerators are congruent modulo $4p(a_1a_2a_3)\bZ$ and so gives 
\begin{equation}
k^2 \equiv e^2 \pmod p
\end{equation}
since $\bZ/p$ has no zero divisors completes the proof.
\end{proof}
\noindent
This proposition gives a necessary condition for $\Sigma(a_1,a_2,a_3) \times \bR$ with flat limits an irreducible at $-\infty$ and $\theta$ at $+\infty$ to admit invariant ASD connections: the numerator in the energy must be a square integer.

The irreducible flat connections $\alpha_1$ and $\alpha_2$ on $\Sigma(2,3,5)$ descend to irreducible flat connections on the quotient $\Sigma(2,3,5)/\pi$ which we still denote by $\alpha_i$.
\begin{thm}
Let $Q$ denote the rational homology sphere quotient $\Sigma(2,3,5)/\pi$ and $\ell=49/120$. Then the formal dimension of the moduli space $\cM_{4\ell/p}(Q\times \bR,\alpha_1,\beta)$ of $SO(3)$-ASD connections on the cylinder $Q\times \bR$ with energy $4\ell/p$ that limit to $\alpha_1$ at $-\infty$ and to a reducible connection $\beta$ at $+\infty$ is 1 when the holonomy representation of the flat connection $\beta$ is $\pm 7 \pmod p$. Similarly, when $\ell=1/120$ the formal dimension of $\cM_{4\ell/p}(Q\times \bR,\alpha_2,\beta)$ is 1 when the holonomy representation of the flat connection $\beta$ is $\pm 1 \pmod p$.
\end{thm}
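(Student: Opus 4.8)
The plan is to compute the formal dimension directly on the quotient cylinder $Q\times\bR$ by means of the Atiyah--Patodi--Singer index theorem, using the $SO(3)$ analogue of the Floer-type dimension formula of Section~2,
\begin{equation*}
\dim\cM_{4\ell/p}(Q\times\bR,\alpha,\beta)=\frac{8\ell}{p}-\tfrac{1}{2}(h_\alpha+h_\beta)+\tfrac{1}{2}\bigl(\rho_Q(\beta)-\rho_Q(\alpha)\bigr),
\end{equation*}
whose leading term is twice the $SO(3)$-Pontryagin charge $4\ell/p$ of the connection on $Q\times\bR$ (so that it specializes to the $8\ell$ of Section~2), and whose correction terms $\rho_Q$ are rho invariants computed on the rational homology sphere $Q$ rather than on $\Sigma(2,3,5)$. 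By the preceding Proposition the reducible limit $\beta$ is forced to have $SO(3)$-holonomy number $\pm e$ modulo $p$ with $\ell\equiv e^2/120$; thus $e=7$ when $\ell=49/120$ and $e=1$ when $\ell=1/120$, which is exactly the holonomy data in the statement. The content is therefore to show that, for this forced $\beta$, the right-hand side evaluates to $1$.

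First I would pin down the end contributions. At $-\infty$ the limit $\alpha_i$ is irreducible, so its stabilizer is finite and $h^0_{\alpha_i}=0$; since $\alpha_i$ descends from a non-degenerate flat connection on $\Sigma(2,3,5)$ one also has $h^1_{\alpha_i}=0$, whence $h_{\alpha_i}=0$. At $+\infty$ the limit $\beta$ is a $U(1)$-reducible flat connection, so its stabilizer is one-dimensional and $h^0_\beta=1$; because the holonomy $\pm e$ is non-trivial modulo $p$, the splitting $\ad\beta=\underline{\bR}\oplus L_\beta$ gives $H^1(Q,\ad\beta)=0$, so $h^1_\beta=0$ and $h_\beta=1$. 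The leading term is then read off from the tables as $8\ell/p=49/(15p)$ in the first case and $8\ell/p=1/(15p)$ in the second.

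The heart of the matter is the evaluation of the two rho invariants on $Q$. For the reducible limit I would compute $\rho_Q(\beta)$ from Auckly's Chern--Simons formula for reducible connections together with a flat $SO(3)$-cobordism from $Q$ to a disjoint union of lens spaces, exactly as $\rho$ is computed on $\Sigma(2,3,5)$ for Table~\ref{table: TableA1}; this yields an explicit Dedekind-sum expression in the Seifert weights $(2,pb_1),(3,pb_2),(5,pb_3)$ of $Q$ and the holonomy $\pm e$. For the irreducible limit I would obtain $\rho_Q(\alpha_i)$ by the same cobordism technique from its descended rotation data. The decisive feature is that the factor $1/p$ in the leading term $8\ell/p$ is cancelled by the $1/p$ carried by the Seifert weights $pb_i$ inside $\rho_Q$, so that the $p$-dependence drops out entirely; substituting then forces $\tfrac{1}{2}(\rho_Q(\beta)-\rho_Q(\alpha_i))=\tfrac{3}{2}-8\ell/p$ and hence $\dim=1$ in both cases.

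I expect the main obstacle to be precisely this rho-invariant computation for the reducible connection $\beta$: one must set up the correct flat $SO(3)$-cobordism, track the orientation and sign conventions so that the eta contributions of the bounding lens spaces are assembled correctly, and verify the exact cancellation of the $p$-dependent terms against the energy. Once the Dedekind-sum evaluation is carried out the remaining arithmetic is routine. As a consistency check, the answer $\dim=1$ is compatible with the known $SU(2)$ dimensions upstairs under passage to the $\pi$-fixed locus, namely the $5$-dimensional $\cM(\Sigma(2,3,5)\times\bR,\alpha_1,\theta)$ and the $1$-dimensional $\cM(\Sigma(2,3,5)\times\bR,\alpha_2,\theta)$ of Table~\ref{TableA2}.
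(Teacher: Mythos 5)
Your setup coincides with the paper's: the APS/Floer-type index formula $\dim=\frac{8\ell}{p}-\frac12(h_\alpha+h_\beta)+\frac12(\rho_Q(\beta)-\rho_Q(\alpha))$ on the quotient cylinder, the values $h_{\alpha_i}=0$ and $h_\beta=1$, and the identification of the holonomy of $\beta$ ($\pm 7$ resp.\ $\pm 1 \pmod p$) via the preceding Proposition. The paper completes the argument by actually evaluating the two rho invariants on $Q$: for the reducible limit $\beta$ it quotes the explicit Kwasik--Lawson formula (a trigonometric sum in the Seifert data $(a_i,pb_i)$ and the holonomy rotation number $l=7$ or $1$), and for the irreducible limits it computes $\rho_Q(\alpha_i)$ by a flat $SO(3)$-cobordism to a union of lens spaces $L(a_i,pb_i)$; substituting these numbers into the index formula yields $1$ in both cases.

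The gap in your proposal sits exactly where the theorem's content lies. You never evaluate $\rho_Q(\beta)$ or $\rho_Q(\alpha_i)$; instead you assert that ``substituting then forces $\frac12(\rho_Q(\beta)-\rho_Q(\alpha_i))=\frac32-8\ell/p$, hence $\dim=1$.'' That identity \emph{is} the statement to be proved: nothing in your argument derives it, and it cannot be ``forced'' by substitution without independently computing the two rho invariants. Your proposed mechanism --- the $1/p$ in the energy term cancelling against the $1/p$ carried by the Seifert weights $pb_i$ --- is a plausible heuristic for why the answer is $p$-independent, but it is not a computation, and you yourself flag the rho-invariant evaluation as the ``main obstacle'' and leave it undone. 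A secondary inaccuracy: Auckly's formula computes Chern--Simons invariants, which is what the Proposition uses to pin down the holonomy of $\beta$ and the admissible energies; it does not give rho invariants. For $\rho_Q(\beta)$ one needs an eta-invariant computation, e.g.\ the Kwasik--Lawson formula the paper cites, alongside the lens-space cobordism computation for the irreducibles. Until those two invariants are evaluated and inserted into the index formula, the claimed dimension $1$ is assumed rather than proved.
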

\begin{proof}
This follows from the proposition.
\end{proof}

\textbf{Proof of Theorem A:} Suppose there exists a smooth extension to $X_0$ with isolated fixed points. If a fixed-point of the $\pi$-action on $X_0$ has rotation numbers $(a,b)$ where $a,b$ are non-zero integers well-defined modulo $p$ then there is an equivariant lift corresponding to the $1$-parameter family of $\pi$-fixed ASD connections in $\cM_1^{\pi}(X,\theta)$ that it generates at the Taubes boundary. This is a $\widetilde{\pi}$-action on the principal $SU(2)$ bundle and has isotropy representation over the fixed point $p=(a,b)$ with weights $\pm (b-a)$ and the action on the $P\vv_{\End(X)}=\Sigma(2,3,5) \times [0,\infty) \times SU(2)$ is given by 
\begin{equation}
\widetilde{t} \cdot (x,s,U)=(tx,s,\phi(\widetilde{t})U)
\end{equation}
where $s\in \bR$, $U\in SU(2)$ and $\phi$ is the isotropy representation $\widetilde{\pi}\rightarrow SU(2)$ at $\infty$ with weights $\pm(a+b)$:
\begin{equation}
\widetilde{t} \mapsto \begin{pmatrix} \widetilde{t}^{a+b} &  \\  & \widetilde{t}^{-(a+b)} \end{pmatrix}
\end{equation}
we can mod out by the involution to get the $\pi$-equivariant adjoint $SO(3)$-bundle over $\Sigma(2,3,5) \times \bR$ with action given by the adjoint representation :$t \mapsto \begin{pmatrix} 1 &  \\  & t^{a+b} \end{pmatrix}$ with $\bZ/p=\langle t\rangle$. In the limit at $+\infty$ on $\Sigma(2,3,5) \times \bR$ the trivial product connection descends to a flat reducible connection on $Q$ whose $SO(3)$ holonomy representation is isomorphic to the adjoint isotropy representation $\ad \phi$. Since this holonomy is $\pm 1$ and $\pm 7 \pmod p$ this completes the proof.
\qed
\vspace{0.25cm}
\indent
The equivariant plumbing actions predict the existence of non-empty Floer type moduli spaces with fractional Yang-Mills energy, these dimensions can be computed by an index calculation using \cite{APS1}:
\begin{equation}
\dim \cM_{4\ell/p}(Q\times \bR,\alpha,\beta)=\frac{8\ell}{p}-\frac{1}{2}(h_{\alpha}+h_{\beta})+\frac{1}{2}(\rho_{\beta}(Q)-\rho_{\alpha}(Q)).
\end{equation}
Since $\alpha$ is irreducible and $\beta$ is reducible we have $h_{\alpha}=0$ and $h_{\beta}=1$. The rho invariants for reducible flat connections are determined by Kwasik-Lawson (\cite{KL93}, p.40) and are given by
\begin{multline}
\rho_{\beta}(Q)(l)=-\frac{2}{p}\sum_{k=1}^{p-1}\sin^{2}(\frac{\pi kl}{p})+\frac{2}{30p}\sum_{k=1}^{p-1}\csc^{2}(\frac{\pi k}{p})\sin^{2}(\frac{\pi kl}{p})\\
+\sum_{i=1}^{3}\frac{2}{pa_i}  \sum_{m_1=0}^{p-1} \sum_{m_2=1}^{a_{i-1}} \cot(\frac{\pi m_2}{a_i}) \cot(\frac{\pi m_1}{p}-\frac{\pi m_2 b_i}{a_i})\sin^2(\frac{\pi m_1 l}{p})
\end{multline}
where $l$ is the rotation number for the holonomy representation of $\beta$ in $SO(3)$. For irreducible flat connections $\alpha$, the rho invariants can be calculated by an $SO(3)$-flat cobordism to a union of lens spaces $L(a_i,pb_i)$ using the mapping cylinder for the Seifert fibration of $Q$ \cite{Yu91} as in the case of $\Sigma(a_1,a_2,a_3)$ \cite[p. 144]{Nikolai}. In this way, the linear equivariant plumbing actions give us that the moduli space $\cM_{\ell}(Q\times \bR,\alpha_2,\beta)$ for $\ell=1/120$ is non-empty and we get the following dimension:
\begin{equation}
\dim \cM_{\ell}(Q\times \bR,\alpha_2,\beta)=\dfrac{8}{p}(\dfrac{1}{120})-\frac{1}{2}+\frac{1}{2}(\rho_{\beta}(Q)(1)-\rho_{\alpha_2}(Q))=1.
\end{equation} 
If we now imagine turning-on a non-linear smooth $\pi$ extension to $X_0$, we don't know if $\cM_{\ell}(Q\times \bR,\alpha_1,\beta)$ for $\ell=49/120$ is non-empty but we have the following formal dimension:
\begin{equation}
\dim \cM_{\ell}(Q\times \bR,\alpha_1,\beta)=\dfrac{8}{p}(\dfrac{49}{120})-\frac{1}{2}+\frac{1}{2}(\rho_{\beta}(Q)(7)-\rho_{\alpha_1}(Q))=1.
\end{equation} 

We have obtained congruence relations that give constraints on the rotation data for the fixed points of a smooth extension. The next natural step is check these constraints against the $G$-signature theorem and we do this in the next two sections.

\section{G-Signature for $4$-Manifolds with Boundary}

For smooth, closed and orientable $4$-manifolds $X$, recall that the Hodge star operator induces an involution $\tau^2=1$ on the complexified sections of forms $\Omega^{\ast}=\oplus_k C^{\infty}(\Lambda^k TX\otimes \bC)$ splitting it into $\pm 1$ eigenspaces $\Omega^+ \oplus \Omega^-$. The signature operator $D^+=d+d^{\ast}$ restricted to $\Omega^+ \rightarrow \Omega^-$ is an elliptic operator whose index is the signature $\Sign(X)=b_2^+ -b_2^-$ of the non-degenerate quadratic form on $H^{2}(X;\bR)$. When a finite group $G$ acts by orientation preserving isometries on $X$, the cotangent bundle, as an equivariant bundle over $X$ has an action that commutes with the Hodge star operator, so we get a $G$-invariant elliptic operator $D^+$ whose $G$-index is a complex virtual representation of $G$.
\begin{equation}
\Ind_G(D^+)=H^+ - H^- \in R(G).
\end{equation}
The associated character or Lefschetz number $$L(g,D^+)=tr(g)(\Ind_G(D^+))=tr(g)\vv_{H^2_{+}}-tr(g)\vv_{H^2_{-}}$$ is called the $g$-signature and denoted by $\Sign(X,g)$. Note that when the action of $G$ is homologically trivial, the $g$-signature coincides with the usual signature. 

The $g$-signature can be computed from the fixed set by the Atiyah-Singer fixed point index theorem. We restrict to the case when $G$ is a finite cyclic group of odd prime order $\bZ/p=\langle t \rangle$, with $t=e^{2\pi i/p}$ and let $T_{p_i}X=\bC^2(a_i,b_i)$ be the local representation data over the fixed points $p_i$ for a homologically trivial action,  then 
\begin{equation}
\Sign(X_0)=\sum_i \left( \dfrac{t^{a_i}+1}{t^{a_i}-1} \right) \left( \dfrac{t^{b_i}+1}{t^{b_i}-1} \right)- 4\sum_{j} \dfrac{ \alpha_{j}t^{c_{j}}} {(t^{c_{j}}-1)^{2}} 
\end{equation}
where $\alpha_{j}$ are the self-intersections of the fixed $2$-spheres and $c_{j}$ is the rotation number of the normal bundle.

Let us now come to the situation where $X_0$ denotes a smooth, compact, simply connected four-manifold with boundary $\partial X_0=\Sigma$ an integral homology $3$-sphere. If a free $\bZ/p=\langle t\rangle$ action on $\Sigma$ extends to a locally linear, homologically-trivial action on $X_0$ (not necessarily free) then the $G$-signature theorem for manifolds with boundary is given in Atiyah-Patodi-Singer \cite{APS2}: 
\begin{equation}
\Sign(X_0,t)=L(X_0,t)-\eta_t(0)
\end{equation}
where $L(X_0,t)$ is the term occurring in the closed manifold case and  $\eta_t(0)$ is the equivariant eta invariant of $\Sigma$ or the G-signature defect. This invariant depends only on the $3$-manifold $\Sigma$ and not on how the action extends to the bounding four manifold $X_0$ nor does it depend on which four manifold is equivariantly bounding it. To see this, suppose the action on $\Sigma$ extends to another four manifold $X_1$, then consider the G-signature theorem on the closed four manifold $X_0 \cup_{\Sigma} -X_1$ to see that the $G$-signature defect terms are the same.

When the boundary $\partial X_0=\Sigma(a_1,a_2,a_3)$ is a Seifert fibered homology sphere, thought of as a link of a complex surface singularity, has a canonical negative definite resolution $\widetilde{X}_0$ given by plumbing disk bundles over $2$-spheres. Let $L(\widetilde{X}_0,t)$ denote the $g$-signature terms coming from the extension given by equivariant plumbing to an action on it's negative definite resolution $\widetilde{X}_0$. Since the actions are homologically trivial this gives the equation $L(X_0,t)-L(\widetilde{X}_0,t)=0$ in $\bQ(t)$. When written out using the contribution from the Lefschetz numbers we get:  
\begin{equation}\label{g-signature untwisted}
\sum_i \left( \dfrac{t^{a_i}+1}{t^{a_i}-1} \right) \left( \dfrac{t^{b_i}+1}{t^{b_i}-1} \right) - \sum_j \left( \dfrac{t^{\widetilde{a}_j}+1}{t^{\widetilde{a}_j}-1} \right) \left( \dfrac{t^{\widetilde{b}_j}+1}{t^{\widetilde{b}_j}-1} \right) + \sum_{\ell} \dfrac{4\widetilde{\alpha}_{\ell}t^{\widetilde{c}_{\ell}}}{(t^{\widetilde{c}_{\ell}}-1)^{2}}=0
\end{equation}
with rotation numbers $(\widetilde{a_j},\widetilde{b_j})$ and with $\widetilde{c_\ell}$ the rotation numbers on the normal bundle of the fixed-spheres in $\widetilde{X}_0$. We will follow \cite{HL89} to obtain congruence relations that relate the rotation numbers from a general extension with those of the linear plumbing action. To do this, recall that  
$\dfrac{t^a-1}{t-1}$
are units in the ring of cyclotomic integers when $a$ and $p$ are relatively prime. Multiplying both sides by $(t-1)^2$ induces an equation in the $R=\bZ[t]/(1+t+t^2+\cdots+t^{p-1})$. Let $I$ be the principal ideal generated by $(t-1)$, we will compute the lower order terms of the $I$-adic expansion
$
\widehat{R}=\varprojlim R/I^n
$
which has coefficients in $R/I\equiv \bZ/p$. To compute these coefficients we lift the equation to $\bZ[t]$ and compute the Taylor expansion about $t=1$ and reduce the coefficients modulo the prime $p$. The indeterminacy under the lift, which comes from the Taylor expansion of the cyclotomic polynomial $1+t+t^2\cdots t^{p-1}$, only affects terms of order greater than equal to $p-1$, since $p$ in the ring $R$ has $I$-adic valuation equal to $p-1$. The first term in equation \ref{g-signature untwisted} has expansion 
\begin{multline}
\sum_i   \dfrac{4}{a_ib_i} + \dfrac{4}{a_ib_i}(t-1) + \dfrac{1}{3} \left( \dfrac{a_i^2+b_i^2+1}{a_ib_i}\right) (t-1)^2 \\
-\dfrac{1}{180}\left(  \dfrac{a_i^4+b_i^4-5a_i^2b_i^2+3}{a_ib_i}  \right) (t-1)^4  \cdots 
\end{multline}
and similarly for the second expression. The last term has the Taylor expansion
\begin{multline}
 4 \sum_{\ell}   \dfrac{\widetilde{\alpha}_{\ell}}{\widetilde{c}_{\ell}^2} +  \dfrac{\widetilde{\alpha}_{\ell}}{\widetilde{c}_{\ell}^2}(t-1)^2  -\dfrac{1}{12}  \dfrac{\widetilde{\alpha}_{\ell}}{\widetilde{c}_{\ell}^2}(\widetilde{c}_{\ell}^2-1)(t-1)^2 \\
 +\dfrac{\widetilde{\alpha}_\ell}{120}\dfrac{(\widetilde{c}_\ell-1)(\widetilde{c}_\ell^3+\widetilde{c}_\ell^2+\widetilde{c}_\ell+1)}{\widetilde{c}_\ell^2} (t-1)^4 \cdots 
\end{multline}
which when equating coefficients give the following congruence relations:
\begin{subequations}
\begin{align}
& \sum_i \dfrac{4}{a_ib_i} - \sum_j \dfrac{4}{\widetilde{a}_j \widetilde{b}_j} + 4 \sum_\ell \dfrac{\widetilde{\alpha}_\ell}{\widetilde{c}_\ell^2} \equiv 0  \pmod p \\
&\dfrac{1}{3} \sum_i \dfrac{a_i^2+b_i^2+1}{a_ib_i}-\dfrac{1}{3} \sum_j \dfrac{\widetilde{a}_j^2+\widetilde{b}_j^2+1}{\widetilde{a}_j \widetilde{b}_j}+ \dfrac{1}{3} \sum_{\ell}\dfrac{\widetilde{\alpha}_\ell}{\widetilde{c}_\ell^2}(\widetilde{c}_\ell^2-1) \equiv 0 \pmod p\\
& \dfrac{-1}{180} \sum_i \dfrac{a_i^4+b_i^4-5a_i^2b_i^2+3}{a_ib_i} + \dfrac{1}{180} \sum_j \dfrac{\widetilde{a}_j^4+\widetilde{b}_j^4-5\widetilde{a}_j^2 \widetilde{b}_j^2+3}{\widetilde{a}_j \widetilde{b}_j}  \\ &+\sum_\ell \dfrac{4\widetilde{\alpha}_\ell}{120}\dfrac{(\widetilde{c}_\ell-1)(\widetilde{c}_\ell^3+\widetilde{c}_\ell^2+\widetilde{c}_\ell+1)}{\widetilde{c}_\ell^2} \equiv 0 \pmod p \notag
\end{align}
\end{subequations}
When $p \geq 7$, the values for $(\widetilde{a_j},\widetilde{b_j})$ which arise from equivariant plumbing along the $E_8$ diagram are given by 
$$ \{(-4,5),(-3,4),(-2,3),(-2,3),(-1,2),(-1,2),(-1,2)\}. $$ 
The central node in the plumbing diagram is a fixed $2$ sphere with self-intersection $\widetilde{\alpha}=-2$ and the rotation on the normal fiber is $\widetilde{c}\equiv 1 \pmod p$. Plugging in these values in the congruence equations proves the following
\begin{thm}
If $p\geq 7$, suppose a free $\bZ/p$ action on $\Sigma(2,3,5)$ extends to a locally linear, homologically trivial action on a four manifold $X_0$ with even negative definite intersection form $Q_{X_0}=-E_8$ with only isolated fixed points with local rotation data $\bC^2(a_i,b_i)$, then the following congruence relations for the rotation numbers hold:
\begin{align}
& \sum_i \dfrac{1}{a_ib_i} \equiv \dfrac{1}{30}  \pmod p\\
& \sum_i \dfrac{a_i^2+b_i^2+1}{a_ib_i} \equiv \dfrac{-269}{15} \pmod p \\
& \sum_i  \dfrac{a_i^4+b_i^4-5a_i^2b_i^2+3}{a_ib_i} \equiv \dfrac{1712}{15} \pmod p
\end{align}
\end{thm}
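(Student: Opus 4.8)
The plan is to specialize the three general congruence relations just derived to the explicit equivariant $E_8$-plumbing data, since those relations already encode the vanishing of $L(X_0,t)-L(\widetilde{X}_0,t)$ coefficient-by-coefficient in the $I$-adic expansion. Thus the theorem reduces to substituting the plumbing fixed-point data into the three congruences and evaluating finite rational sums. First I would record the data: the seven isolated fixed points of the plumbing action carry rotation numbers $(-4,5),(-3,4),(-2,3),(-2,3),(-1,2),(-1,2),(-1,2)$, and the central node is a fixed $2$-sphere with self-intersection $\widetilde{\alpha}=-2$ and normal rotation number $\widetilde{c}\equiv 1\pmod p$. The hypothesis $p\geq 7$ enters here: it guarantees that none of the products $\widetilde{a}_j\widetilde{b}_j\in\{-20,-12,-6,-2\}$ nor the integer $30$ is divisible by $p$, so every denominator appearing below is invertible modulo $p$.

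The key simplifying observation is the effect of $\widetilde{c}\equiv 1$ on the $2$-sphere contributions. In the second relation the sphere term carries the factor $(\widetilde{c}^2-1)$ and in the third it carries $(\widetilde{c}-1)$; both vanish modulo $p$, so the fixed $2$-sphere drops out of the second and third congruences entirely. In the first relation the sphere contributes only $\widetilde{\alpha}/\widetilde{c}^2\equiv -2$. After these reductions each congruence equates $\sum_i(\cdots)/(a_ib_i)$ to a single finite sum over the seven plumbing points, together with the constant $+2$ in the first case.

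It then remains to evaluate the three plumbing sums. I would compute $\sum_j 1/(\widetilde{a}_j\widetilde{b}_j)=-59/30$, whence the first congruence gives $\sum_i 1/(a_ib_i)\equiv -59/30+2=1/30$; next $\sum_j(\widetilde{a}_j^2+\widetilde{b}_j^2+1)/(\widetilde{a}_j\widetilde{b}_j)=-269/15$, yielding the second congruence; and finally $\sum_j(\widetilde{a}_j^4+\widetilde{b}_j^4-5\widetilde{a}_j^2\widetilde{b}_j^2+3)/(\widetilde{a}_j\widetilde{b}_j)=1712/15$, yielding the third (note that each of the three $(-1,2)$ points contributes $0$ to this last sum, since $1+16-20+3=0$). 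The only genuine work is this rational-arithmetic bookkeeping, tracking the signs and the $\tfrac13$ and $-\tfrac1{180}$ normalizations; there is no conceptual obstacle once the general congruences and the plumbing data are in hand. The one point needing care is verifying that the central node's normal rotation number really is $\equiv 1\pmod p$ for the standard action, since the entire simplification of the second and third congruences rests on it.
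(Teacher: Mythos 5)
Your proposal is correct and is exactly the paper's argument: the paper derives the three general congruences from $L(X_0,t)-L(\widetilde{X}_0,t)=0$ via the $I$-adic expansion and then proves the theorem by "plugging in" the $E_8$-plumbing data $\{(-4,5),(-3,4),(-2,3)\times 2,(-1,2)\times 3\}$ with $\widetilde{\alpha}=-2$, $\widetilde{c}\equiv 1$, just as you do. Your rational arithmetic ($-59/30+2=1/30$, $-269/15$, $1712/15$, including the vanishing sphere terms from the factors $\widetilde{c}^2-1$ and $\widetilde{c}-1$) checks out.
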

\begin{ex}
The expression for the equivariant eta invariant can be determined by equivariant plumbing as above. For $p\geq 7$ it is given by 
$$ 
\eta_t(0)=\left( \dfrac{t^{\widetilde{a}_i}+1}{t^{\widetilde{a}_i}-1} \right) \left( \dfrac{t^{\widetilde{b}_i}+1}{t^{\widetilde{b}_i}-1}\right)+\dfrac{8t}{(t-1)^2}+8.
$$
When $p=7$ it can be checked that the following rotation data $\{(a_i,b_i)\}=\{(1,1) \times 3, (1,-3),(1,-1) \times 2, (2,2) \times 2,(3,3)\}$ solve the $G$-signature theorem:
$$
-8=\sum_i \left( \dfrac{t^{a_i}+1}{t^{a_i}-1} \right) \left( \dfrac{t^{b_i}+1}{t^{b_i}-1} \right)-\eta_t(0)
$$
and satisfy 
\begin{gather*}
\sum_i \dfrac{1}{a_ib_i} \equiv 4 \equiv \dfrac{1}{30} \pmod 7\\
\sum_i \dfrac{a_i^2+b_i^2+1}{a_ib_i} \equiv 4 \equiv \dfrac{-269}{15} \pmod 7\\
\sum_i  \dfrac{a_i^4+b_i^4-5a_i^2b_i^2+3}{a_ib_i}\equiv 4 \equiv \dfrac{1712}{15} \pmod 7.
\end{gather*}
\end{ex}

\section{Twisted G-Signature Relations}

Consider now twisting the $G$-signature operator $D^+$ with an equivariant vector bundle $E \in K_G(X)$ of complex rank $2$ over $X$:
\begin{equation}
D^+_E: \Lambda^+\otimes E \rightarrow \Lambda^- \otimes E .
\end{equation}
The index is given by the following formula, see Donnelly \cite[p.901]{Don78}:
\begin{equation}
\chi(g):=\Sign(X,g,E)=\sum_{N \in \Fix(g)} \int_N ch_g(E \vv_N)\cL(N) -\eta_g(0,E)
\end{equation}
where the equivariant Chern character is well-defined over the fixed set since $K_G(X^g)=K(X^g)\otimes R(G)$ \cite{segal68}. Our goal is to extract one more congruence relation as we did for the untwisted $G$-signature operator. This time the congruence will relate not only the tangential representations but also the isotropy representations on the equivariant bundle $E$ over the fixed set.
Consider two distinct equivariant lifts which agree on the $\End(X)=\Sigma(2,3,5) \times [0,\infty)$; the topological index gives two virtual representations and taking the difference of their associated characters gives:
\begin{equation}
\chi(t)-\widetilde{\chi}(t)=L(X,t,E)-L(\widetilde{X},t,E).
\end{equation}
Since $\chi(t)$ and $\widetilde{\chi}(t)$ are virtual characters and $\chi(1)=\widetilde{\chi}(1)$ we can carry out the same procedure to get congruence relations that relate one equivariant lift to another. First we will compute the contribution of the fixed points as in \cite {Shanahan78}:
 
\begin{equation}
\begin{split}
L(g,D^+_E)\vv_{p_i}&=L_{\theta}(N^g_{\theta})\ch_g(E)[p_i]\\
&=\dfrac{\ch_g(\Lambda^+-\Lambda^-)(N^g \otimes \bC)\ch_g(E)}{\ch_g(\Lambda_{-1}N^g\otimes \bC)}[p_i]\\           
          &=\dfrac{(e^{-2\pi a_i i/p}-e^{2\pi a_i i/p})(e^{-2\pi b_i i/p}-e^{2\pi b_i i/p})(e^{2\pi i \lambda/p}+e^{-2\pi i \lambda/p})}{(1-e^{2\pi a_i i/p})(1-e^{-2\pi a_i i/p})(1-e^{2\pi b_i i/p})(1-e^{-2\pi b_i i/p})} \\
          &=\coth(a_i \pi i/p)\coth(b_i \pi i/p)(e^{2\pi i \lambda/p}+e^{-2\pi i \lambda/p}) \\
          &=i^2\cot(a_i \pi/p)\cot(b_i \pi/p)(e^{2\pi i \lambda/p}+e^{-2\pi i \lambda/p}) \\
          &= \left( \dfrac{t^{a_i}+1}{t^{a_i}-1} \right) \left( \dfrac{t^{b_i}+1}{t^{b_i}-1} \right) (t^{\lambda}+t^{-\lambda})
\end{split}
\end{equation}
where as a $\pi$-equivariant $SU(2)$ bundle, $E=\bC(\lambda) \oplus \bC(-\lambda)$ and $\ch_g(E)=t^\lambda+t^{-\lambda}$. Similarly, we can compute the contribution from a fixed $2$-sphere $F$:
\begin{equation}
\begin{split}
L(g,D^+_E)\vv_F&=L(T^g)L_{\theta}(N^g_{\theta})\ch_g(E)[F]\\
&=x\coth(x/2)\coth(\frac{y+i\theta}{2})\{t^{\lambda}+t^{-\lambda}\}[F] \\
          &=\{2+\dfrac{x^2}{6}+\cdots\}\{\coth(i\theta/2)-\frac{1}{2} \csch ^2(i\theta/2)y \cdots\} \{t^{\lambda}+t^{-\lambda}\}[F] \\           
          &=-\csch^2(i\theta/2)y (t^{\lambda}+t^{-\lambda})[F]\\
          &=\dfrac{-4t^c}{(t^c-1)^2}F \cdot F (t^{\lambda}+t^{-\lambda})
\end{split}
\end{equation}
where $x$ is the Euler class of $F$ and $y$ the Euler class of the normal bundle. Now the Taylor expansion on the right hand side of $$(\chi(t)-\widetilde{\chi}(t))(t-1)^2=(L(X,t,E)-\widetilde{L}(X,t,E))(t-1)^2$$ up to order two has the form:
\begin{align*}
\sum_i \dfrac{8}{a_ib_i}+\dfrac{8}{a_ib_i}(t-1) +\dfrac{2}{3}\left( \dfrac{6\lambda_i^2+a_i^2+b_i^2+1}{a_ib_i}\right)(t-1)^2\\
   - \bigg( \sum_j \dfrac{8}{\widetilde{a_j}\widetilde{b_j}}+ \dfrac{8}{\widetilde{a_j}\widetilde{b_j}}(t-1)+\dfrac{2}{3}\left( \dfrac{6\widetilde{\lambda_j}^2+\widetilde{a_j}^2+\widetilde{b_j}^2+1}{\widetilde{a_j}\widetilde{b_j}}\right)(t-1)^2\\
  +\dfrac{16}{\widetilde{c}^2}+\dfrac{16}{\widetilde{c}^2}(t-1)-\dfrac{4}{3}\left(\dfrac{\widetilde{c}^2-1-6\widetilde{\lambda}^2}{c^2}\right)(t-1)^2 \bigg)
\end{align*}
and the second order term of the expansion of the left hand side vanishes. Here $(a_i,b_i)$ are rotation numbers for an extension with just isolated fixed points with $\lambda_i$ the isotropy representations in $E$. The $(\widetilde{a_j},\widetilde{b_j})$ are rotation numbers from equivariant plumbing extension with $\widetilde{\lambda}_j$ the isotropy over the fixed points and $\widetilde{c}$ the rotation number on the normal fiber over the fixed $2$-sphere and isotropy $\widetilde{\lambda}$ for the equivariant lift over this sphere.

Plugging in the values from equivariant plumbing and equating the second order coefficients gives the following relation: 
\begin{equation}
4\sum_i \dfrac{\lambda_i^2}{a_ib_i}-4\sum_j \dfrac{\widetilde{\lambda_j}^2}{\widetilde{a_j}\widetilde{b_j}}+\dfrac{4}{3}\left( \dfrac{\widetilde{c}^2-1-6\widetilde{\lambda^2}}{\widetilde{c}^2} \right)\equiv 0 \pmod p
\end{equation} 
Since $\widetilde{c}=1$ we have 
\begin{equation}\label{twisted relation}
4\sum_i \dfrac{\lambda_i^2}{a_ib_i}\equiv 4\sum_j \dfrac{\widetilde{\lambda_j}^2}{\widetilde{a_j}\widetilde{b_j}}+8\widetilde{\lambda}^2 \pmod p.
\end{equation}   
To summarize, this congruence relation relates the tangential representations at fixed point set as well as the isotorpy representations over the fibers for two different extensions and is valid so long as the the choices of equivariant lifts agree on $\Sigma(2,3,5)$ over the $\End(X)$.
\noindent
\textbf{Proof of Theorem B:}
\noindent
The previous untwisted congruence relation
\begin{equation}
\sum_i \dfrac{a_i^2+b_i^2+1}{a_ib_i} \equiv \dfrac{-269}{15}\pmod p
\end{equation}
gives the following when we add and subtract $2a_ib_i$ from the numerator:
\begin{equation}\label{sum with squares}
\sum_{i=1}^9 \dfrac{(a_i+b_i)^2}{a_ib_i}\equiv \dfrac{1}{30} \pmod p
\end{equation}
By Theorem A we have the possibilities $a_i+b_i \equiv \pm 1$ or $a_i+b_i\equiv \pm 7 \pmod p$. We can then split this sum as $U+49V\equiv 1/30 \pmod p$ where $U=\sum 1/a_ib_i$ with the sum over points that satisfy  $a_i+b_i \equiv \pm 1$ and similarly for $V$. But the relation $\sum_i 1/a_ib_i \equiv 1/30$ can also be split as $U+V\equiv 1/30 \pmod p$. Together they show at once that $V\equiv 0 \pmod p$ and that not all points can satisfy $a_i+b_i\equiv \pm 7 \pmod p$. 

So there is at least one point $(a,b)$ with $a+b \equiv \pm 1 \pmod p$ and this agrees with the equivariant lift on the $\End(X)$ of the plumbing action from the $3$-dimensional stratum in the moduli space over the negative definite resolution $\widetilde{X}_0$. The isotropy representations for this equivariant lift has $\widetilde{\lambda}=\widetilde{\lambda_j}=\pm 1/2 \pmod p$. Plugging these values into equation \ref{twisted relation} gives the relation
\begin{equation}
4\sum_i \dfrac{\lambda_i^2}{a_ib_i} \equiv \dfrac{1}{30}.
\end{equation}
Now choosing the equivariant lift arising from the fixed point $(a,b)$ gives isotropy $(b-a)/2$ over $(a,b)$ and $(a+b)/2 \equiv \pm 1/2$ over all the other fixed points:  
\begin{equation}
\dfrac{(b-a)^2}{ab} +\sum_{i=1}^8 \dfrac{1}{a_ib_i}\equiv \dfrac{1}{30} \pmod p.
\end{equation}
Similarly we can write equation \ref{sum with squares} as
\begin{equation}
\dfrac{(a+b)^2}{ab}+\sum_{i=1}^8 \dfrac{(a_i+b_i)^2}{a_ib_i} \equiv \dfrac{1}{30} \pmod p
\end{equation}
and subtracting the two equations, noting that there is no contribution from $V$ gives $-4\equiv 0 \pmod p$ which contradicts our prime $p$ is odd.\qed

\vspace{.5 cm}
It is worth noting that a similar analysis can be carried out with other Brieskorn homology $3$-spheres. It is of particular interest to study $\bZ/p$ actions on homotopy $K3$ surfaces containing an equivariant copy of $\Sigma(2,7,13)$.

\bibliographystyle{alpha}
\bibliography{Bibliography}

\end{document}